\newtheorem{theorem}{Theorem}
\theoremstyle{plain}
\newtheorem{conjecture}{Conjecture}
\newtheorem{corollary}{Corollary}
\newtheorem{lemma}{Lemma}
\newtheorem{problem}{Problem}
\newtheorem{remark}{Remark}
\numberwithin{equation}{section}
\begin{document}
\title[Several completely monotone functions]{Several completely monotone
functions related to DeTemple's sequence}
\author{Zhen-Hang Yang}
\address{Power Supply Service Center, ZPEPC Electric Power Research
Institute, Hangzhou, Zhejiang, China, 310009}
\email{yzhkm@163.com}
\date{July 15, 2014}
\subjclass[2010]{ 33B15, 26D15}
\keywords{Psi function, completely monotone function, inequality}
\thanks{This paper is in final form and no version of it will be submitted
for publication elsewhere.}

\begin{abstract}
In this paper, we present the necessary and sufficient conditions such that
several functions involving $R\left( x\right) =\psi \left( x+1/2\right) -\ln
x$ with a parameter are completely monotone on $\left( 0,\infty \right) $,
where $\psi $ is the digamma function. This generalizes some known results
and verifies a conjecture posed by Chen.
\end{abstract}

\maketitle

\section{Introduction}

A function $f$ is said to be completely monotonic on an interval $I$ , if $f$
has derivatives of all orders on $I$ and satisfies

\begin{equation}
(-1)^{n}f^{(n)}(x)\geq 0\text{ for all }x\in I\text{ and }n=0,1,2,....
\label{cm}
\end{equation}

If the inequality (\ref{cm}) is strict, then $f$ is said to be strictly
completely monotonic on $I$. It is known (Bernstein's Theorem) that $f$ is
completely monotonic on $(0,\infty )$ if and only if

\begin{equation*}
f(x)=\int_{0}^{\infty }e^{-xt}d\mu \left( t\right) ,
\end{equation*}%
where $%
%TCIMACRO{\U{3bc} }%
%BeginExpansion
\mu
%EndExpansion
$ is a nonnegative measure on $[0,\infty )$ such that the integral converges
for all $x>0$, see \cite[p. 161]{Widder-PUPP-1941}.

For $x>0$ the classical Euler's gamma function $\Gamma $ and psi (digamma)
function $\psi $ are defined by%
\begin{equation}
\Gamma \left( x\right) =\int_{0}^{\infty }t^{x-1}e^{-t}dt,\text{ \ \ \ \ }%
\psi \left( x\right) =\frac{\Gamma ^{\prime }\left( x\right) }{\Gamma \left(
x\right) },  \label{Gamma}
\end{equation}%
respectively. The derivatives $\psi ^{\prime }$, $\psi ^{\prime \prime }$, $%
\psi ^{\prime \prime \prime }$, ... are known as polygamma functions.

As the important role played in many branches, such as mathematical physics,
probability, statistics, engineering, the gamma and polygamma functions have
attracted the attention of many scholars. In particular many authors
published numerous interesting inequalities for the Euler-Mascheroni
constant defined by%
\begin{equation*}
\gamma =\lim_{n\rightarrow \infty }D_{n}:=\lim_{n\rightarrow \infty }\left(
\sum_{k=1}^{n}\frac{1}{k}-\ln n\right) =0.577215664...,
\end{equation*}%
as well as digamma (psi) function $\psi $ due to $\psi \left( n+1\right)
=H_{n}-\gamma $.

It is known that the sequence $D_{n}$ converges very slowly to its limit,
due to%
\begin{equation*}
\frac{1}{2\left( n+1\right) }<D_{n}-\gamma <\frac{1}{2n}
\end{equation*}%
(see \cite{Rippon-AMM-93-1986}, \cite{Young-MG-75-1991}). Some quicker
approximations to the Euler-Mascheroni constant were established in \cite%
{Tims-MG-55-1971}, \cite{Toth-AMM-98-1991}, \cite{Toth-AMM-99-1992}, \cite%
{DeTemple-AMM-100-1993}, \cite{Negoi-GM-15-1997}, \cite%
{Vernescu-GMSA-17(96)-1999}, \cite{Qi-JMAA-310(1)-2005}, \cite%
{Sintamarian-NA-46-2007}, \cite{Chen-GJAMMS-1(1)-2008}, \cite%
{Sintamarian-JIPAM-9(2)-2008}, \cite{Villarino-JIPAM-9(3)-2008}, \cite%
{Chen-JMI-3(1)-2009}, \cite{Chen-RRC-12(3)-2009}, \cite{Chen-AMC-217-2010}, 
\cite{Chen-AML-23-2010}, \cite{Mortici-CMA-59-2010}, \cite%
{Mortici-AMC-215-2010}, \cite{Mortici-CMA-29-2010}, \cite%
{Guo-AMC-218(3)-2011}, \cite{Chen-BMAA-3-2011}, \cite{Mortici-NA-56-2011}, 
\cite{Chen-CMA-64-2013}, \cite{Gavrea-AMC-224-2013}, \cite{Lu-JMAA-419-2014}.

In 1993, DeTemple \cite{DeTemple-AMM-100-1993} defined a modified sequence 
\begin{equation}
R_{n}=\sum_{k=1}^{n}\frac{1}{k}-\ln \left( n+\frac{1}{2}\right)  \label{R_n}
\end{equation}%
to approximate to $\gamma $, which converges faster since%
\begin{equation}
\frac{1}{24\left( n+1\right) ^{2}}<R_{n}-\gamma <\frac{1}{24n^{2}},
\label{De1}
\end{equation}%
and%
\begin{equation}
\frac{7}{960}\frac{1}{\left( n+1\right) ^{4}}<R_{n}-\gamma -\frac{1}{24n^{2}}%
<\frac{7}{960}\frac{1}{n^{4}}.  \label{De2}
\end{equation}%
Villarino \cite[Theorem 1.7]{Villarino-JIPAM-9(3)-2008} proved that for $%
n\in \mathbb{N}$, the following inequality%
\begin{equation}
\tfrac{1}{24\left( n+1/2\right) ^{2}+21/5}<R_{n}-\gamma <\tfrac{1}{24\left(
n+1/2\right) ^{2}+1/\left( 1-\ln 3+\ln 2-\gamma \right) -54}  \label{V}
\end{equation}%
is valid with the best constants $21/5$ and $1/\left( 1-\ln 3+\ln 2+\func{Psi%
}\left( 1\right) \right) -54\approx 3.7393$. Chen \cite{Chen-AML-23-2010}
proved that%
\begin{equation}
\tfrac{1}{24}\left( n+\lambda \right) ^{-2}<R_{n}-\gamma <\frac{1}{24}\left(
n+\frac{1}{2}\right) ^{-2},  \label{Ch}
\end{equation}%
where%
\begin{equation*}
\lambda =\frac{1}{2\sqrt{6\left( 1+\func{Psi}\left( 1\right) -\ln 3+\ln
2\right) }}-1\approx 0.551\,07
\end{equation*}
and $1/2$ are the best constants.

Mortici \cite[Theorem 2.1]{Mortici-JSA-2(13)-2010} gave new bounds as
follows:%
\begin{equation}
\frac{1}{24}\left( n+\frac{1}{2}+\frac{7}{80n}\right) ^{-2}<R_{n}-\gamma <%
\frac{1}{24}\left( n+\frac{1}{2}\right) ^{-2}.  \label{M}
\end{equation}

In \cite{Batir-AM-91-2008}, \cite{Qi-arxiv-0902.2524} (also see \cite%
{Guo-A-34-2014}), it was established that%
\begin{equation*}
\gamma +\ln \left( n+\tfrac{1}{2}\right) <\sum_{k=1}^{n}\frac{1}{k}\leq
\gamma +\ln \left( n+e^{1-\gamma }-1\right) ,
\end{equation*}%
which is equivalent to%
\begin{equation*}
0<R_{n}-\gamma \leq \ln \frac{n+e^{1-\gamma }-1}{n+1/2}.
\end{equation*}

On the other hand, Karatsuba \cite{Karatsuba-NA-24-2000} proved that for all
integers $n\geq 1$, $H(n)<H(n+1)$, where $H(n)$ is defined by%
\begin{equation}
H(n)=\left( R_{n}-\gamma \right) n^{2}.  \label{H(n)}
\end{equation}%
In view of $\psi \left( n+1\right) =H_{n}-\gamma $, we see that%
\begin{equation*}
H(n)=\left( R_{n}-\gamma \right) n^{2}=\left( \psi \left( n+1\right) -\ln
\left( n+\frac{1}{2}\right) \right) n^{2}.
\end{equation*}%
As mentioned in \cite{Anderson-PA-2001}, some computer experiments also seem
to indicate that $(1+1/n)^{2}H(n)$ is a decreasing convex function. Chen 
\cite[Theorem 2]{Chen-JMI-3(1)-2009} proved that for all integers $n\geq
1,H(n)$ and $(\left( n+1/2\right) /n)^{2}H(n)$ are both strictly increasing
concave sequences, while $[\left( (n+1)/n\right) ^{2}H(n)$ is strictly
decreasing convex sequence. Also, he conjectured in \cite[Cojecture 1]%
{Chen-JMI-3(1)-2009} that

(i) the functions $H(x)$ and $(\left( x+1/2\right) /x)^{2}H(x)$ are both
so-called Bernstein function on $(0,\infty )$. That is,%
\begin{eqnarray*}
H(x) &>&0\text{, \ }\left( -1\right) ^{n}\left[ H(x)\right] ^{\left(
n+1\right) }>0, \\
(\left( x+1/2\right) /x)^{2}H(x) &>&0\text{, \ }\left( -1\right) ^{n}\left[
(\left( x+1/2\right) /x)^{2}H(x)\right] ^{\left( n+1\right) }>0
\end{eqnarray*}%
hold for $x>0$ and $n\in \mathbb{N}$.

(ii) The function $\left( (x+1)/x\right) ^{2}H(x)$ is strictly completely
monotonic on $(0,\infty )$.

We remark that the functions $H(x)$ is not Bernstein function on $(0,\infty
) $ due to%
\begin{eqnarray*}
-H^{\prime \prime }(0^{+}) &=&2\ln 2-2\gamma \approx -0.231\,86<0, \\
-H^{\prime \prime }(1/2) &=&2\gamma +4\ln 2+\frac{7}{2}\zeta \left( 3\right)
-\pi ^{2}+\frac{7}{4}\approx 0.014615>0.
\end{eqnarray*}

The aim of this paper is to prove the complete monotonicity of certain
functions involving%
\begin{equation}
R\left( x\right) =\psi \left( x+1/2\right) -\ln x.  \label{R(x)}
\end{equation}%
In Section 2, some useful lemmas are given. Our main results are presented
in Section 3, in which Theorem \ref{MT-ha} indicates that the second
conjecture posed by Chen is valid, while Theorems \ref{MT-Fa}--\ref{MT-Ga}
show the necessary and sufficient conditions such that three functions
involving $R\left( x\right) $ with a parameter are completely monotone on $%
\left( 0,\infty \right) $. In the last section, some remarks, conjecture and
open problem are presented.

\section{Lemmas}

In order to prove our results, we need some lemmas.

\begin{lemma}
\label{Lemma Rip}For $t>0$, let the function $Q$ defined by%
\begin{equation}
Q\left( t\right) =\frac{1}{t}-\frac{1}{2\sinh \frac{t}{2}}.  \label{Q}
\end{equation}%
Then%
\begin{eqnarray}
R\left( x\right) &=&\int_{0}^{\infty }e^{-xt}Q\left( t\right) dt,  \label{R0}
\\
xR\left( x\right) &=&\int_{0}^{\infty }e^{-xt}Q^{\prime }\left( t\right) dt,
\label{R1} \\
x^{2}R\left( x\right) &=&\frac{1}{24}+\int_{0}^{\infty }e^{-xt}Q^{\prime
\prime }\left( t\right) dt,  \label{R2} \\
x^{3}R\left( x\right) &=&\frac{1}{24}x+\int_{0}^{\infty }e^{-xt}Q^{\prime
\prime \prime }\left( t\right) dt,  \label{R3} \\
x^{4}R\left( x\right) &=&\frac{1}{24}x^{2}-\frac{7}{960}+\int_{0}^{\infty
}e^{-xt}Q^{(4)}\left( t\right) dt.  \label{R4}
\end{eqnarray}
\end{lemma}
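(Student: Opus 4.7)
The plan is to derive all five identities from a single integral representation of $R(x)$ by iterated integration by parts in $t$. For (\ref{R0}), I would combine Gauss's integral
\[
\psi(x+1/2) = \int_0^\infty \left(\frac{e^{-t}}{t} - \frac{e^{-(x+1/2)t}}{1-e^{-t}}\right) dt
\]
with Frullani's representation $\ln x = \int_0^\infty (e^{-t} - e^{-xt})/t\, dt$. Subtracting and using the identity
\[
\frac{e^{-(x+1/2)t}}{1-e^{-t}} = \frac{e^{-xt}}{e^{t/2} - e^{-t/2}} = \frac{e^{-xt}}{2\sinh(t/2)}
\]
collapses the integrand to $e^{-xt} Q(t)$, giving (\ref{R0}).

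For (\ref{R1})--(\ref{R4}), I would iterate the integration-by-parts formula
\[
\int_0^\infty e^{-xt} Q^{(k)}(t)\, dt = \frac{Q^{(k)}(0)}{x} + \frac{1}{x}\int_0^\infty e^{-xt} Q^{(k+1)}(t)\, dt,
\]
which is valid because $Q$ extends analytically through $t = 0$ and $Q^{(k)}(t) e^{-xt} \to 0$ as $t \to \infty$ (since $1/\sinh(t/2)$ and its derivatives decay like $e^{-t/2}$, while the contributions coming from $1/t$ decay at worst polynomially). Writing $x^k R(x) = P_k(x) + \int_0^\infty e^{-xt} Q^{(k)}(t)\, dt$, the recursion for the polynomial prefactor reads $P_{k+1}(x) = x\,P_k(x) + Q^{(k)}(0)$ with $P_0 = 0$. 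Expanding $2\sinh(t/2) = t + t^3/24 + t^5/1920 + O(t^7)$ and inverting gives
\[
Q(t) = \frac{t}{24} - \frac{7}{5760}\,t^3 + O(t^5),
\]
so $Q(0) = Q''(0) = Q^{(4)}(0) = 0$, $Q'(0) = 1/24$, and $Q'''(0) = -7/960$. Substituting into the recursion yields $P_1 = 0$, $P_2 = 1/24$, $P_3 = x/24$, and $P_4 = x^2/24 - 7/960$, exactly the polynomial prefactors appearing in the lemma.

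The only nontrivial computation is the Taylor expansion of $Q$ at the origin, which pins down the constants $1/24$ and $-7/960$ in the statement; once these four derivative values are in hand, (\ref{R1})--(\ref{R4}) follow mechanically from the iteration, with the vanishing of boundary terms at $t = \infty$ being routine to verify from the exponential decay of $1/\sinh(t/2)$.
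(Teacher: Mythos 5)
Your proposal is correct and follows essentially the same route as the paper: derive (\ref{R0}) from the Gauss and Frullani integral representations of $\psi$ and $\ln$, then obtain (\ref{R1})--(\ref{R4}) by iterated integration by parts, with the boundary constants determined by the Taylor expansion of $Q$ at the origin (the paper instead states the corresponding limits $\lim_{t\to 0}e^{-xt}Q^{(k)}(t)$ directly, and your recursion $P_{k+1}(x)=xP_k(x)+Q^{(k)}(0)$ is just a tidier bookkeeping of the same steps). One small note: your value $Q'(0)=1/24$ is correct; the paper's intermediate claim $\lim_{t\to 0}\bigl(e^{-xt}Q'(t)\bigr)=-\tfrac{1}{24}$ carries a spurious minus sign, though the resulting formula (\ref{R2}) is stated correctly.
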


\begin{proof}
Using the integral representations \cite[p. 259]{Abramowitz-AMS-W-1965}%
\begin{equation*}
\psi (x)=\int_{0}^{\infty }\left( \frac{e^{-t}}{t}-\frac{e^{-xt}}{1-e^{-t}}%
\right) dt\text{ \ \ \ and \ \ \ }\ln x=\int_{0}^{\infty }\frac{%
e^{-t}-e^{-xt}}{t}dt
\end{equation*}%
gives%
\begin{eqnarray*}
R\left( x\right) &=&\psi (x+\tfrac{1}{2})-\ln x=\int_{0}^{\infty }\left( 
\frac{e^{-xt}}{t}-\frac{e^{-(x+1/2)t}}{1-e^{-t}}\right) dt \\
&=&\int_{0}^{\infty }e^{-xt}\left( \frac{1}{t}-\frac{1}{2\sinh \frac{t}{2}}%
\right) dt=\int_{0}^{\infty }e^{-xt}Q\left( t\right) dt.
\end{eqnarray*}%
An integration by part yields%
\begin{eqnarray*}
xR\left( x\right) &=&x\int_{0}^{\infty }e^{-xt}Q\left( t\right)
dt=-\int_{0}^{\infty }Q\left( t\right) de^{-xt} \\
&=&-e^{-xt}Q\left( t\right) |_{0}^{\infty }+\int_{0}^{\infty
}e^{-xt}Q^{\prime }\left( t\right) dt=\int_{0}^{\infty }e^{-xt}Q^{\prime
}\left( t\right) dt,
\end{eqnarray*}%
where the last equality holds due to $\lim_{t\rightarrow \infty }\left(
e^{-xt}Q\left( t\right) \right) =\lim_{t\rightarrow 0}\left( e^{-xt}Q\left(
t\right) \right) =0$.

Integrations by parts again and noticing that%
\begin{equation*}
\lim_{t\rightarrow \infty }\left( e^{-xt}Q^{\prime }\left( t\right) \right)
=0\text{ \ and \ }\lim_{t\rightarrow 0}\left( e^{-xt}Q^{\prime }\left(
t\right) \right) =-\frac{1}{24}
\end{equation*}%
lead to%
\begin{eqnarray*}
x^{2}R\left( x\right) &=&x\int_{0}^{\infty }e^{-xt}Q^{\prime }\left(
t\right) dt=-e^{-xt}Q^{\prime }\left( t\right) |_{0}^{\infty
}+\int_{0}^{\infty }e^{-xt}Q^{\prime \prime }\left( t\right) dt \\
&=&\frac{1}{24}+\int_{0}^{\infty }e^{-xt}Q^{\prime \prime }\left( t\right)
dt.
\end{eqnarray*}

Similarly, we have%
\begin{eqnarray*}
x^{3}R\left( x\right) &=&\frac{1}{24}x+x\int_{0}^{\infty }e^{-xt}Q^{\prime
\prime }\left( t\right) dt=\frac{1}{24}x-e^{-xt}Q^{\prime \prime }\left(
t\right) |_{0}^{\infty }+\int_{0}^{\infty }e^{-xt}Q^{\prime \prime \prime
}\left( t\right) dt \\
&=&\frac{1}{24}x+\int_{0}^{\infty }e^{-xt}Q^{\prime \prime \prime }\left(
t\right) dt
\end{eqnarray*}%
due to $\lim_{t\rightarrow \infty }\left( e^{-xt}Q^{\prime \prime }\left(
t\right) \right) =\lim_{t\rightarrow 0}\left( e^{-xt}Q^{\prime \prime
}\left( t\right) \right) =0$ and%
\begin{eqnarray*}
x^{4}R\left( x\right) &=&\frac{1}{24}x^{2}+x\int_{0}^{\infty
}e^{-xt}Q^{\prime \prime \prime }\left( t\right) dt=\frac{1}{24}%
x^{2}-e^{-xt}Q^{\prime \prime \prime }\left( t\right) |_{0}^{\infty
}+\int_{0}^{\infty }e^{-xt}Q^{(4)}\left( t\right) dt \\
&=&\frac{1}{24}x^{2}-\frac{7}{960}+\int_{0}^{\infty }e^{-xt}Q^{(4)}\left(
t\right) dt
\end{eqnarray*}%
due to%
\begin{equation*}
\lim_{t\rightarrow \infty }\left( e^{-xt}Q^{\prime \prime \prime }\left(
t\right) \right) =0\text{ \ and \ }\lim_{t\rightarrow 0}\left(
e^{-xt}Q^{\prime \prime \prime }\left( t\right) \right) =-\frac{7}{960}.
\end{equation*}%
This completes the proof.
\end{proof}

In the same method as Lemma 5 in \cite{Yang-AAA-2014-702718}, we can prove
the following statement.

\begin{lemma}[{\protect\cite[Lemma 5]{Yang-AAA-2014-702718}}]
\label{Lemma zp}Let $P\left( t\right) $ be a power series which is
convergent on $\left( 0,\infty \right) $ defined by%
\begin{equation*}
P\left( t\right) =\sum_{i=m+1}^{\infty }a_{i}t^{i}-\sum_{i=0}^{m}a_{i}t^{i},
\end{equation*}%
where $a_{i}\geq 0$ for $i\geq m+1$ with $\max_{i\geq m+1}\left(
a_{i}\right) >0$ and $a_{m}>0$, $a_{i}\geq 0$ for $0\leq i\leq m-1$. Then
there is a unique number $t_{0}\in \left( 0,\infty \right) $ to satisfy $%
P\left( t\right) =0$ such that $P\left( t\right) <0$ for $t\in \left(
0,t_{0}\right) $ and $P\left( t\right) >0$ for $t\in \left( t_{0},\infty
\right) $.
\end{lemma}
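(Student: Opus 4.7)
The plan is to rewrite $P$ as a difference of two nonnegative power series and reduce the sign-change question to strict monotonicity of their ratio. Put
\[
A(t)=\sum_{i=m+1}^{\infty}a_{i}t^{i},\qquad B(t)=\sum_{i=0}^{m}a_{i}t^{i},
\]
so that $P=A-B$, both $A$ and $B$ are nonnegative on $(0,\infty)$, and $B(t)>0$ for $t>0$ since $a_{m}>0$. Let $\phi(t)=A(t)/B(t)$; then $P(t)=0$ iff $\phi(t)=1$, and the sign of $P$ coincides with that of $\phi-1$.

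First I would pin down the endpoint behavior of $\phi$. If $k$ denotes the smallest index with $a_{k}>0$ (so $k\le m$), then $B(t)\sim a_{k}t^{k}$ and $A(t)=O(t^{m+1})$ as $t\to 0^{+}$, whence $\phi(t)\to 0$. As $t\to\infty$, $B$ is a polynomial of degree exactly $m$, while $A(t)\ge a_{i_{0}}t^{i_{0}}$ for some $i_{0}\ge m+1$ with $a_{i_{0}}>0$, so $\phi(t)\to\infty$. By continuity, $\phi$ takes the value $1$ somewhere in $(0,\infty)$, giving existence of $t_{0}$.

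The heart of the argument is strict monotonicity of $\phi$ on $(0,\infty)$, which forces uniqueness. Differentiating term by term (legitimate since both series are entire) and collecting the Cauchy product yields
\[
A'(t)B(t)-A(t)B'(t)=\sum_{\substack{i\ge m+1\\ 0\le j\le m}}(i-j)\,a_{i}a_{j}\,t^{i+j-1}.
\]
Every factor $i-j$ is at least $(m+1)-m=1>0$, every coefficient $a_{i}a_{j}$ is nonnegative, and the pair $(i,j)=(i_{0},m)$ contributes a strictly positive term; hence $A'B-AB'>0$ for all $t>0$, so $\phi$ is strictly increasing.

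Combining the previous steps, $\phi$ is continuous and strictly increasing on $(0,\infty)$ with limits $0$ and $\infty$, so there is a unique $t_{0}$ with $\phi(t_{0})=1$, and $\phi<1$ on $(0,t_{0})$, $\phi>1$ on $(t_{0},\infty)$. Via $P=B(\phi-1)$ and $B>0$, this is exactly the sign pattern claimed. The only mild obstacle is the bookkeeping in the Cauchy-product expansion of $A'B-AB'$ and checking that the positivity follows solely from the hypotheses $a_{m}>0$ and $\max_{i\ge m+1}a_{i}>0$, which both come out cleanly.
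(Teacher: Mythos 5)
Your proof is correct. Note that the paper itself does not prove this lemma---it states it with the remark that the proof follows ``in the same method as Lemma 5 in \cite{Yang-AAA-2014-702718}''---so there is no in-paper argument to compare against line by line. That said, your route is a clean and self-contained one: split $P=A-B$ with $A(t)=\sum_{i\ge m+1}a_{i}t^{i}$, $B(t)=\sum_{i=0}^{m}a_{i}t^{i}$, observe $B>0$ on $(0,\infty)$ because $a_{m}>0$, and reduce the sign pattern of $P$ to that of $\phi-1$ with $\phi=A/B$. Your endpoint limits $\phi(0^{+})=0$ and $\phi(\infty)=\infty$ are justified correctly (using that $a_{m}>0$ forces $\deg B=m$ and some $a_{i_{0}}>0$ with $i_{0}\ge m+1$ exists), and the identity
\[
A'(t)B(t)-A(t)B'(t)=\sum_{i\ge m+1}\sum_{j=0}^{m}(i-j)\,a_{i}a_{j}\,t^{i+j-1}
\]
is valid (here $B$ is a polynomial, so ``Cauchy product'' is a slight misnomer---you are just distributing a finite sum over an entire power series, which is unproblematic); every term has $i-j\ge 1$ and the pair $(i_{0},m)$ contributes a strictly positive term, so $\phi'>0$. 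An equivalent and marginally shorter route to uniqueness, which avoids forming $\phi$ altogether, is: at any zero $t_{0}$ of $P$ one has $A(t_{0})=B(t_{0})$, and then
\[
t_{0}P'(t_{0})=t_{0}A'(t_{0})-t_{0}B'(t_{0})\ge (m+1)A(t_{0})-mB(t_{0})=B(t_{0})>0,
\]
so $P$ increases strictly through each of its zeros and hence has at most one. Either argument settles the lemma; yours is fine as written.
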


Using Lemma \ref{Lemma zp} we can prove the following

\begin{lemma}
\label{Lemma dQ/Q}Let the function $Q$ defined by (\ref{Q}). Then $Q^{\prime
}\left( t\right) \geq c_{0}Q\left( t\right) $ for $t>0$, where%
\begin{equation}
c_{0}=\min_{t>0}\left( \frac{Q^{\prime }\left( t\right) }{Q\left( t\right) }%
\right) \approx -0.061875.  \label{c0}
\end{equation}
\end{lemma}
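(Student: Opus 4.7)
The plan is to study $g(t) := Q'(t)/Q(t)$ directly. First, I would verify $Q(t) > 0$ on $(0,\infty)$: this reduces to the elementary inequality $2\sinh(t/2) > t$, which is immediate from the Taylor series of $\sinh$. Hence $g$ is a well-defined smooth function on $(0,\infty)$, and the claim $Q'(t) \geq c_0 Q(t)$ is equivalent to $g(t) \geq c_0 = \min_{t>0} g(t)$.

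Next, I would pin down the boundary behavior of $g$. From $Q(t) = t/24 - 7t^3/5760 + O(t^5)$ (expand $1/(2\sinh(t/2))$ around $0$), one gets $Q'(t) = 1/24 + O(t^2)$, so $g(t) \to +\infty$ as $t \to 0^+$. For $t \to \infty$, since $1/(2\sinh(t/2))$ decays exponentially, $Q(t) \sim 1/t$ and $Q'(t) \sim -1/t^2$, giving $g(t) \sim -1/t \to 0^-$. So $g$ takes negative values but stays bounded below on the complement of any compact neighborhood of $[t_1, t_2]$, and continuity on the compact part yields the existence of a (finite, negative) global minimum; the remaining question is \emph{where} and whether it is unique.

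To settle this, I would examine $g'(t) = P(t)/Q(t)^2$ where $P(t) := Q''(t)Q(t) - Q'(t)^2$. At $t = 0$ we have $P(0) = 0\cdot 0 - (1/24)^2 = -1/576 < 0$; from the asymptotics above, $Q'' Q \sim 2/t^4$ and $(Q')^2 \sim 1/t^4$, so $P(t) \sim 1/t^4 > 0$ for large $t$. The goal is to show that $P$ has a \emph{unique} zero $t_0 \in (0,\infty)$, negative before and positive after. To achieve this, I would multiply $P(t)$ through by a suitable positive factor (a power of $t$ and a power of $\sinh(t/2)$) to clear the fractions $1/t$ and $1/(2\sinh(t/2))$ inside $Q, Q', Q''$, and then expand the resulting entire function as a power series $\sum a_i t^i$. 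Using the expansion of $\operatorname{csch}$ in terms of Bernoulli numbers and some bookkeeping of cross-terms, I would verify that the coefficients $a_i$ fit the hypothesis of Lemma~\ref{Lemma zp}: a short initial block is nonpositive (with $a_m > 0$ after sign flip, i.e. a strictly negative leading segment consistent with $P(0) < 0$) followed by nonnegative coefficients with at least one strictly positive tail coefficient. The Lemma then yields a unique $t_0 > 0$ with $P(t) < 0$ on $(0,t_0)$ and $P(t) > 0$ on $(t_0,\infty)$.

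Granted the sign table for $P$, the conclusion is immediate: $g$ strictly decreases on $(0,t_0)$ and strictly increases on $(t_0,\infty)$, so $g$ attains its global minimum uniquely at $t_0$, with value $c_0 = g(t_0)$ (numerically $\approx -0.061875$), and $Q'(t) \geq c_0 Q(t)$ on $(0,\infty)$ follows from $Q > 0$. The \textbf{main obstacle} is the third step: arranging $P(t)$ as a power series in the exact form required by Lemma~\ref{Lemma zp}. The raw Taylor coefficients of $Q$ have \emph{alternating} signs (since they come from $(2^{2n}-2)B_{2n}$), so applying the lemma directly to $P$ will not work; the clearing-of-denominators multiplication, and careful tracking that the convolution produces a uniformly signed tail, is the substantive part of the argument.
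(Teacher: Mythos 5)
Your proposal follows essentially the same route as the paper: reduce the claim to showing that $Q'/Q$ is first decreasing then increasing, by clearing the denominators from the numerator of $(Q'/Q)'$ (equivalently from $Q''Q-(Q')^2$) so as to obtain an entire function, expanding that function as a power series in $t$, and invoking Lemma~\ref{Lemma zp} from the sign pattern of the coefficients. You correctly flag but do not carry out the substantive step; the paper performs it by writing the cleared numerator as $\sum_{n\geq 3}\frac{u_n}{(2n)!}t^{2n}$ with a closed formula for $u_n$, then checking $u_3=0$, $u_n<0$ for $4\leq n\leq 10$, and establishing $u_n\geq 0$ for $n\geq 11$ via the recursion $u_{n+1}-16u_n>0$.
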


\begin{proof}
Differentiation yields%
\begin{equation*}
\frac{Q^{\prime }\left( t\right) }{Q\left( t\right) }=\frac{\frac{1}{4}\frac{%
\cosh \frac{t}{2}}{\sinh ^{2}\frac{t}{2}}-\frac{1}{t^{2}}}{\frac{1}{t}-\frac{%
1}{2\sinh \frac{t}{2}}}=\frac{t^{2}\cosh \frac{t}{2}-4\sinh ^{2}\frac{t}{2}}{%
4t\sinh ^{2}\frac{t}{2}-2t^{2}\sinh \frac{t}{2}},
\end{equation*}%
\begin{eqnarray*}
\left( \frac{Q^{\prime }\left( t\right) }{Q\left( t\right) }\right) ^{\prime
} &=&\tfrac{1}{4t^{2}\left( t-2\sinh \frac{t}{2}\right) ^{2}\sinh ^{2}\frac{t%
}{2}}(-16t\sinh ^{3}\frac{t}{2}+t^{4}\cosh ^{2}\frac{t}{2}+2t^{3}\sinh ^{3}%
\frac{t}{2} \\
&&-t^{4}\sinh ^{2}\frac{t}{2}+16\sinh ^{4}\frac{t}{2}+8t^{2}\cosh \frac{t}{2}%
\sinh ^{2}\frac{t}{2}-4t^{3}\cosh ^{2}\frac{t}{2}\sinh \frac{t}{2}) \\
&:&=\frac{p\left( \frac{t}{2}\right) }{4t^{2}\left( t-2\sinh \frac{t}{2}%
\right) ^{2}\sinh ^{2}\frac{t}{2}},
\end{eqnarray*}%
where%
\begin{eqnarray*}
p\left( t\right) &=&16(t^{4}\cosh ^{2}t+t^{3}\sinh ^{3}t-t^{4}\sinh
^{2}t+\sinh ^{4}t-2t\sinh ^{3}t \\
&&+2t^{2}\cosh t\sinh ^{2}t-2t^{3}\cosh ^{2}t\sinh t.
\end{eqnarray*}%
If we prove that there is a unique number $t_{0}>0$ such that $Q^{\prime }/Q$
is decreasing on $\left( 0,t_{0}\right) $ and increasing on $\left(
t_{0},\infty \right) $, then we have $Q^{\prime }\left( t\right) /Q\left(
t\right) \geq Q^{\prime }\left( t_{0}\right) /Q\left( t_{0}\right) $ for all 
$t>0$. For this end, we use the "product into sum" formulas and Taylor
expansion to get%
\begin{eqnarray*}
\frac{1}{2}p\left( t\right) &=&\cosh 4t+4t^{2}\cosh 3t-2t^{3}\sinh
3t-4t\sinh 3t-4\cosh 2t \\
&&-4t^{2}\cosh t-10t^{3}\sinh t+12t\sinh t+8t^{4}+3 \\
&=&\sum_{n=0}^{\infty }\frac{4^{2n}t^{2n}}{\left( 2n\right) !}%
+4\sum_{n=1}^{\infty }\frac{3^{2n-2}t^{2n}}{\left( 2n-2\right) !}%
-2\sum_{n=2}^{\infty }\frac{3^{2n-3}t^{2n}}{\left( 2n-3\right) !}%
-4\sum_{n=1}^{\infty }\frac{3^{2n-1}t^{2n}}{\left( 2n-1\right) !} \\
&&-4\sum_{n=0}^{\infty }\frac{2^{2n}t^{2n}}{\left( 2n\right) !}%
-4\sum_{n=1}^{\infty }\frac{t^{2n}}{\left( 2n-2\right) !}-10\sum_{n=2}^{%
\infty }\frac{t^{2n}}{\left( 2n-3\right) !} \\
&&+12\sum_{n=1}^{\infty }\frac{t^{2n}}{\left( 2n-1\right) !}+8t^{4}+3 \\
&:&=\sum_{n=3}^{\infty }\frac{u_{n}}{\left( 2n\right) !}t^{2n},
\end{eqnarray*}%
where%
\begin{equation*}
u_{n}=\left( 4^{2n}-8n\left( 2n^{2}-9n+13\right) 3^{2n-3}-2^{2n+2}-8n\left(
10n^{2}-13n+1\right) \right) .
\end{equation*}%
A direct verification gives $u_{3}=0$, $u_{n}<0$ for $4\leq n\leq 10$, and
we now show that $u_{n}\geq 0$ for $n\geq 11$. It is easy to check that $%
u_{n}$ satisfies the recursive relation%
\begin{eqnarray*}
u_{n+1}-16u_{n} &=&8\left( 14n^{3}-117n^{2}+199n-54\right) 3^{2n-3}+48\times
2^{2n} \\
&&+1200n^{3}-1800n^{2}+88n+16 \\
&>&0\text{ for }n\geq 11\text{, }
\end{eqnarray*}%
which together with $u_{11}=1636\,643\,754\,240>0$ indicates that $u_{n}\geq
0$ for $n\geq 11$.

By Lemma \ref{Lemma zp} we see that there is a unique $t_{0}^{\prime }>0$
such that $p\left( t\right) <0$ for $t\in \left( 0,t_{0}^{\prime }\right) $
and $p\left( t\right) <0$ for $t\in \left( t_{0}^{\prime },\infty \right) $,
which implies that $Q^{\prime }/Q$ is decreasing on $\left( 0,2t_{0}^{\prime
}\right) $ and increasing on $\left( 2t_{0}^{\prime },\infty \right) $.
Numeric calculation shows that $t_{0}=2t_{0}^{\prime }=15.40151...$, and so $%
Q^{\prime }\left( t_{0}\right) /Q\left( t_{0}\right) \approx -0.061875$.

This completes the proof.
\end{proof}

To simplify the proofs of some crucial inequalities, we need to the
following lemma.

\begin{lemma}
\label{Lemma sinht/t}For $t>0$, the following inequalities hold:%
\begin{eqnarray}
\frac{\sinh t}{t} &>&3\frac{2\cosh t+3}{\cosh t+14},  \label{sinht/t>1} \\
\frac{\sinh t}{t} &>&15\frac{2\cosh ^{2}t+10\cosh t+9}{2\cosh ^{2}t+101\cosh
t+212},  \label{sinht/t>2} \\
\frac{\sinh t}{t} &<&15\frac{18\cosh ^{2}t+160\cosh t+179}{1159\cosh
^{2}t+4192\cosh t+4}\cosh t.  \label{sinht/t<1}
\end{eqnarray}
\end{lemma}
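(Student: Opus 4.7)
The plan is to clear denominators in each of the three inequalities and reduce the assertion to the strict positivity on $(0,\infty)$ of an odd power series with (eventually) nonnegative Maclaurin coefficients. The reduction uses only the product-to-sum identities $\sinh t\cosh t=\tfrac12\sinh 2t$, $\sinh t\cosh 2t=\tfrac12(\sinh 3t-\sinh t)$, $2\cosh^2 t=1+\cosh 2t$, and $4\cosh^3 t=\cosh 3t+3\cosh t$ to rewrite each side as a linear combination of $\sinh(jt)$, $t\cosh(jt)$, and a linear term in $t$; expanding in Maclaurin series then produces a single power series whose sign must be analysed.

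For (\ref{sinht/t>1}), multiplying by $t(\cosh t+14)>0$ and rearranging gives the equivalent assertion
\[
\tfrac12\sinh 2t+14\sinh t-6t\cosh t-9t>0.
\]
The coefficient of $t^{2n+1}$ for $n\ge 1$ is $(4^n-12n+8)/(2n+1)!$ (the $n=0$ contribution cancels the linear $-9t$); this vanishes at $n=1,2$ and is strictly positive for $n\ge 3$ by the one-line induction $4\cdot 4^n\ge 4(12n-8)>12(n+1)-8$ valid for $n\ge 2$. Hence the series is strictly positive on $(0,\infty)$.

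For (\ref{sinht/t>2}) and (\ref{sinht/t<1}) the procedure is identical, with $4\cosh^3 t=\cosh 3t+3\cosh t$ invoked as an additional identity in (\ref{sinht/t<1}). After reduction, the $(2n+1)!$-scaled coefficient of $t^{2n+1}$ takes the shape $A\cdot 9^n+B(n)\cdot 4^n+C(n)$ with $B(n),C(n)$ affine in $n$; for instance inequality (\ref{sinht/t>2}) becomes $\sinh 3t+101\sinh 2t+425\sinh t>30t\cosh 2t+300t\cosh t+300t$ with scaled coefficient $f(n)=3\cdot 9^n+(172-60n)4^n+(125-600n)$. Because each bound is a high-order Pad\'e-type approximant, the first several coefficients (through roughly $n=4$) vanish identically and must be checked by direct arithmetic; for $n$ beyond this range the dominant $9^n$-term swamps the $4^n$-term (which carries a negative coefficient from some point on) and the linear terms. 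I would propagate positivity in the tail by a recursion of the form $f(n+1)-9f(n)=(300n-1100)4^n+4800n-1600$, which is positive once $n\ge 4$ and gives $f(n)>9f(n-1)>0$ inductively from the first nonzero base case.

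The main obstacle is bookkeeping rather than conceptual difficulty: the Pad\'e matching makes the leading coefficients cancel delicately, so the arithmetic must be carried out with care, and one must choose the inductive recursion so that the $9^n$-term is clearly dominant from the first $n$ at which $B(n)$ has turned negative. Beyond this, the argument uses only the elementary fact that a power series with nonnegative Maclaurin coefficients and at least one positive coefficient is strictly positive on $(0,\infty)$; in particular Lemma \ref{Lemma zp} is not needed here, since we are establishing strict positivity rather than locating a sign change.
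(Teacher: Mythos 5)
Your proposal is correct and follows a genuinely different route from the paper's proof. The paper establishes (\ref{sinht/t>1}) by citation (to \cite{Yang-AAA-2014-364076}), and proves (\ref{sinht/t>2}) and (\ref{sinht/t<1}) by clearing the rational-function side only (not the $t$ in the denominator), setting, e.g.,
\[
p_1(t) = \frac{2\cosh^2 t+101\cosh t+212}{2\cosh^2 t+10\cosh t+9}\sinh t-15t,
\]
and then differentiating once: the key (and somewhat surprising) observation is that $p_1'(t)$ and $p_2'(t)$ factor as $\pm\,(\cosh t-1)^5$ times a manifestly positive rational function in $\cosh t$, giving strict monotonicity and hence the sign from the value at $t=0$. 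Your approach instead clears all denominators, converts everything to a linear combination of $\sinh(jt)$, $t\cosh(jt)$ and $t$ via product-to-sum identities, and reads the sign off the Maclaurin coefficients, which vanish through $n=4$ (you verified the first case and I confirm the formulas $f(n)=3\cdot 9^n+(172-60n)4^n+(125-600n)$ and the recursion $f(n+1)-9f(n)=(300n-1100)4^n+4800n-1600$ are correct, with $f(1)=\cdots=f(4)=0$ and $f(5)=43200$). Both methods turn on the same Pad\'e-matching structure --- the factor $(\cosh t-1)^5$ in the paper's derivative is the exact counterpart of the five vanishing leading coefficients in your series. Your route is more mechanical and self-contained (it also handles (\ref{sinht/t>1}) rather than citing it), at the cost of heavier coefficient bookkeeping; the paper's route needs only one differentiation and a single polynomial factorization per inequality, but the clean degree-5 factorization is not something one would guess a priori.

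Two small precision points worth tightening if you write this out in full. First, in (\ref{sinht/t<1}) the RHS contains $15t\cosh t\cdot 18\cosh^2 t$, so via $4\cosh^3 t=\cosh 3t+3\cosh t$ you get a $t\cosh 3t$ term and hence the scaled coefficient of $t^{2n+1}$ has shape $A(n)\,9^n+B(n)\,4^n+C(n)$ with \emph{all three} coefficients affine in $n$ (the $9^n$-coefficient is $3207-540n$, not a constant), so the phrase ``the dominant $9^n$-term swamps the rest'' needs adjusting: for $n\ge 6$ all three terms are simultaneously negative (which is what you want for that inequality), and only $n=5$ needs a direct check beyond the vanishing block $n\le 4$. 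Second, you would want to display the reduced forms and base-case checks for (\ref{sinht/t<1}) rather than appealing to analogy, since that is where the coefficient structure differs slightly from the illustrated case. Neither of these affects the soundness of the method.
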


\begin{proof}
The inequality (\ref{sinht/t>1}) has been proven in \cite[Theorem 18]%
{Yang-AAA-2014-364076}.

To prove (\ref{sinht/t>2}), it suffices to show that for $t>0$%
\begin{equation*}
p_{1}\left( t\right) :=\frac{2\cosh ^{2}t+101\cosh t+212}{2\cosh
^{2}t+10\cosh t+9}\sinh t-15t>0.
\end{equation*}%
Differentiating and factoring give%
\begin{eqnarray*}
p_{1}^{\prime }\left( t\right) &=&\tfrac{2\cosh ^{2}t+101\cosh t+212}{2\cosh
^{2}t+10\cosh t+9}\cosh t-7\tfrac{26\cosh ^{2}t+116\cosh t+173}{\left(
2\cosh ^{2}t+10\cosh t+9\right) ^{2}}\sinh ^{2}t-15 \\
&=&\frac{4\left( \cosh t-1\right) ^{5}}{\left( 2\cosh ^{2}t+10\cosh
t+9\right) ^{2}}>0,
\end{eqnarray*}%
which yields $p_{1}\left( t\right) >p_{1}\left( 0\right) =0$.

Similarly, inequality (\ref{sinht/t<1}) is equivalent to%
\begin{equation*}
p_{2}\left( t\right) :=\frac{1159\cosh ^{2}t+4192\cosh t+4}{\left( 18\cosh
^{2}t+160\cosh t+179\right) \cosh t}\sinh t-15t<0.
\end{equation*}%
Differentiating and factoring lead us to%
\begin{eqnarray*}
p_{2}^{\prime }\left( t\right) &=&\tfrac{1159\cosh ^{2}t+4192\cosh t+4}{%
\left( 18\cosh ^{2}t+160\cosh t+179\right) \cosh t}\cosh t+\left( \sinh
^{2}t\right) \frac{d}{dx}\left( \tfrac{1159x^{2}+4192x+4}{\left(
18x^{2}+160x+179\right) x}\right) -15 \\
&=&-\frac{4\left( 1215x+179\right) \left( x-1\right) ^{5}}{x^{2}\left(
18x^{2}+160x+179\right) ^{2}}<0,
\end{eqnarray*}%
where $x=\cosh t>1$. This implies that $p_{2}\left( t\right) <p_{2}\left(
0\right) =0$.
\end{proof}

\begin{lemma}
\label{Lemma ddQ/Q>}The function $Q$ defined by (\ref{Q}) satisfies%
\begin{equation*}
q_{1}\left( t\right) :=Q^{\prime \prime }\left( t\right) +\frac{7}{40}%
Q\left( t\right) >0
\end{equation*}%
for all $t>0$.
\end{lemma}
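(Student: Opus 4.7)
The plan is to clear the hyperbolic denominators in $q_{1}$ and then analyze the resulting inequality by Taylor expansion. Differentiating (\ref{Q}) twice gives
\[
Q''(t) = \frac{2}{t^{3}} - \frac{\cosh^{2}(t/2)+1}{8\sinh^{3}(t/2)},
\]
so after multiplying $q_{1}(t)$ by the positive quantity $80\,t^{3}\sinh^{3}(t/2)$ and setting $u = t/2$, the inequality $q_{1}(t) > 0$ is equivalent to
\[
\Phi(u) := (20 + 7u^{2})\sinh^{3}u - u^{3}(17\cosh^{2}u + 3) > 0 \quad (u > 0).
\]

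I would then expand $\Phi$ as an odd power series using the identities $\sinh^{3}u = (\sinh 3u - 3\sinh u)/4$ and $\cosh^{2}u = (1 + \cosh 2u)/2$. Writing $\Phi(u) = \sum_{m\ge 1} b_{m}\,u^{2m+1}$, one obtains $b_{1} = 0$ and, for $m \ge 2$,
\[
b_{m} = \frac{5(3^{2m+1}-3)}{(2m+1)!} + \frac{7(3^{2m-1}-3)}{4(2m-1)!} - \frac{17\cdot 2^{2m-3}}{(2m-2)!}.
\]
The coefficient $7/40$ in the lemma is calibrated precisely so that $b_{1} = b_{2} = b_{3} = 0$, which is routine to verify from the formula. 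Positivity of $\Phi$ on $(0,\infty)$ thus reduces to establishing $b_{m} > 0$ for every $m \ge 4$.

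After clearing factorials, $b_{m} > 0$ becomes
\[
[180 + 14m(2m+1)] \cdot 3^{2m-1} > 34(2m-1)m(2m+1) \cdot 2^{2m-1} + 42m(2m+1) + 60.
\]
The ratio of the dominant positive contribution $14m(2m+1)\cdot 3^{2m-1}$ to the dominant negative contribution $34(2m-1)m(2m+1)\cdot 2^{2m-1}$ equals $\tfrac{7}{17(2m-1)}(3/2)^{2m-1}$, which grows geometrically in $m$; for $m \ge 5$ it already exceeds $1.7$, so a straightforward induction handles the tail. The main obstacle is the base case $m = 4$, where this ratio drops to roughly $1.005$ and every subleading contribution must be tracked carefully; a direct computation nonetheless gives $b_{4} = 2071/7560 > 0$, completing the argument.
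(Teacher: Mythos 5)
Your proof is correct, and it takes a genuinely different route from the paper. Both arguments begin by clearing the denominator $t^3\sinh^3(t/2)$ and arriving at the same hyperbolic expression (your $\Phi(u)$ is precisely the paper's $(80\sinh^3 t)\,q_1(2t)$ with $t$ renamed $u$). From there the paper substitutes its earlier rational lower bound $\sinh t/t>3(2\cosh t+3)/(\cosh t+14)$ (Lemma~\ref{Lemma sinht/t}, which it has to prove separately), reduces to a polynomial in $x=\cosh t$, and factors out $(x-1)^3$. You instead expand $\Phi$ as an odd power series and show every coefficient $b_m$ is nonnegative; the structural point is that $b_1=b_2=0$ automatically (since $q_1(2u)=O(u)$ and the prefactor $u^3\sinh^3u$ contributes $u^6$), the value $7/40$ is exactly what forces the next coefficient $b_3$ to vanish, and a ratio/induction argument plus the hand computation $b_4=2071/7560$ handle $m\ge 4$. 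What your approach buys is self-containment and transparency about why $7/40$ is the threshold constant; what the paper's approach buys is reusing the $\sinh t/t$ bound it already has on hand, at the price of a somewhat opaque polynomial factorization. One small imprecision: it is really $b_3=0$ that is ``calibrated'' by the choice $7/40$; the vanishing of $b_1$ and $b_2$ is automatic from the prefactor $u^3\sinh^3 u$ regardless of the constant. Also, the tail estimate is only sketched (``a straightforward induction''), so a referee would want the one-line observation that for $m\ge 5$ one has $14m(2m+1)\,3^{2m-1}>1.7\cdot 34(2m-1)m(2m+1)\,2^{2m-1}$ and the residual $0.7\cdot 34(2m-1)m(2m+1)\,2^{2m-1}$ plus $180\cdot 3^{2m-1}$ overwhelm the polynomial terms $42m(2m+1)+60$, together with the monotonicity of the ratio $\tfrac{7}{17(2m-1)}(3/2)^{2m-1}$ in $m$; but this is a routine fill-in, not a gap.
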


\begin{proof}
Differentiation yields%
\begin{equation}
Q^{\prime \prime }\left( t\right) =\frac{1}{8\sinh \frac{t}{2}}-\frac{1}{4}%
\frac{\cosh ^{2}\frac{t}{2}}{\sinh ^{3}\frac{t}{2}}+\frac{2}{t^{3}},
\label{ddQ}
\end{equation}%
and then%
\begin{equation*}
q_{1}\left( t\right) =-\frac{1}{80}\frac{20t^{3}\cosh ^{2}\frac{t}{2}%
-14t^{2}\sinh ^{3}\frac{t}{2}-3t^{3}\sinh ^{2}\frac{t}{2}-160\sinh ^{3}\frac{%
t}{2}}{t^{3}\sinh ^{3}\frac{t}{2}}.
\end{equation*}%
To prove the desired inequality, we write $q_{1}\left( 2t\right) $ in the
form of 
\begin{equation*}
\left( 80\sinh ^{3}t\right) q_{1}\left( 2t\right) =20\left( \frac{\sinh t}{t}%
\right) ^{3}+7\left( \sinh ^{2}t\right) \frac{\sinh t}{t}+3\sinh
^{2}t-20\cosh ^{2}t.
\end{equation*}%
Employing inequality (\ref{sinht/t>1}), we get%
\begin{eqnarray*}
\left( 80\sinh ^{3}t\right) q_{1}\left( 2t\right) &>&20\left( 3\frac{2\cosh
t+3}{\cosh t+14}\right) ^{3}+7\left( \cosh ^{2}t-1\right) \left( 3\frac{%
2\cosh t+3}{\cosh t+14}\right) \\
&&+3\left( \cosh ^{2}t-1\right) -20\cosh ^{2}t \\
&=&25\left( \cosh ^{2}t+24\cosh t+240\right) \frac{\left( \cosh t-1\right)
^{3}}{\left( \cosh t+14\right) ^{3}}>0,
\end{eqnarray*}%
which proves the desired inequality.
\end{proof}

\begin{lemma}
\label{Lemma ddddQ/Q<}The function $Q$ defined by (\ref{Q}) satisfies%
\begin{equation*}
q_{2}\left( t\right) :=Q^{(4)}\left( t\right) -\frac{31}{336}Q\left(
t\right) <0
\end{equation*}%
for all $t>0$.
\end{lemma}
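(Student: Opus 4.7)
The plan is to follow the same strategy as Lemma \ref{Lemma ddQ/Q>}. First I would differentiate the explicit expression for $Q''(t)$ in (\ref{ddQ}) two more times to obtain
$$Q^{(4)}(t)=\frac{24}{t^{5}}-\frac{1}{32\sinh(t/2)}-\frac{5}{8\sinh^{3}(t/2)}-\frac{3}{4\sinh^{5}(t/2)}.$$
Combining with $Q(t)=1/t-1/(2\sinh(t/2))$ yields an explicit formula for $q_{2}(t)$, and after the substitution $t\mapsto 2t$ and multiplication by the positive factor $672\sinh^{5}t$, the inequality $q_{2}(2t)<0$ becomes equivalent to
$$E(t):=504\left(\frac{\sinh t}{t}\right)^{5}-31\sinh^{4}t\cdot\frac{\sinh t}{t}+10\sinh^{4}t-420\sinh^{2}t-504<0.$$

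To bound $E(t)$ from above I would apply two of the inequalities in Lemma \ref{Lemma sinht/t}. The upper bound (\ref{sinht/t<1}) controls the positive term $504(\sinh t/t)^{5}$, while the lower bound (\ref{sinht/t>2}) controls the factor $\sinh t/t$ in the negative term $-31\sinh^{4}t\cdot(\sinh t/t)$, since lower-bounding that factor makes the negative term less negative and thereby yields an upper bound on the whole expression. After substituting and using $\sinh^{2}t=\cosh^{2}t-1$, the problem reduces to a rational inequality in $c=\cosh t\geq 1$; clearing the denominators $(1159c^{2}+4192c+4)^{5}$ and $2c^{2}+101c+212$ produces a polynomial inequality of the form $\Phi(c)\leq 0$ for $c\geq 1$.

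The central algebraic step is then the verification of this polynomial inequality. A Taylor expansion around $c=1$ shows that the two bounds arising from (\ref{sinht/t<1}) and (\ref{sinht/t>2}) each agree with $\sinh t/t$ through order $(c-1)^{3}$; this is why $\Phi(c)$ itself vanishes to the same order with a strictly negative coefficient at $(c-1)^{4}$, consistent with the behavior $E(t)\sim-\tfrac{2233}{720}t^{8}\sim\text{const}\cdot(c-1)^{4}$ of the original function near $t=0$. I would therefore extract the factor $(c-1)^{4}$ and reduce the problem to verifying non-positivity of the remaining polynomial on $[1,\infty)$, either by seeking a clean closed-form factorization analogous to $25(c^{2}+24c+240)(c-1)^{3}/(c+14)^{3}$ from the proof of Lemma \ref{Lemma ddQ/Q>}, or by expanding in $x=c-1\geq 0$ and checking that all coefficients of the resulting polynomial are non-positive.

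The principal obstacle will be the algebraic bookkeeping: the fifth power brought in by (\ref{sinht/t<1}) together with the quadratic denominators produces a polynomial $\Phi(c)$ of degree in the high teens, so that even after extracting $(c-1)^{4}$ the residual polynomial is lengthy. The tight agreement of the bounds with $\sinh t/t$ through cubic order is simultaneously what makes $\Phi$ sharp enough to be non-positive and what makes the computational bookkeeping demanding.
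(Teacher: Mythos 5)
Your setup is correct: the formula for $Q^{(4)}$ matches the paper's after simplifying with $\cosh^2 = 1+\sinh^2$, and the identity
\begin{equation*}
\bigl(672\sinh^5 t\bigr)q_2(2t)=504\left(\tfrac{\sinh t}{t}\right)^5-31\sinh^4 t\cdot\tfrac{\sinh t}{t}+10\sinh^4 t-420\sinh^2 t-504=:E(t)
\end{equation*}
agrees (after expanding $\cosh^2 t=1+\sinh^2 t$) with the paper's expression for $-(672\sinh^5 t)q_2(2t)$. The logical step of majorizing the two occurrences of $\sinh t/t$ by \emph{different} one-sided bounds — the upper bound for the term with a plus sign, the lower bound for the term with a minus sign — is a valid way to produce an upper bound $\widetilde E(t)\geq E(t)$, and it would let you avoid the case split that the paper performs.

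However, the bound you obtain is false for large $t$, so the key step ``$\Phi(c)\leq 0$ on $[1,\infty)$'' would fail if you carried out the computation. The culprit is the global use of (\ref{sinht/t<1}): writing $c=\cosh t$, the majorant there satisfies $L_+(c)=15c\frac{18c^2+160c+179}{1159c^2+4192c+4}\sim\frac{270}{1159}\,c$ as $c\to\infty$, whereas $\sinh t/t\sim e^t/(2t)\sim c/\ln(2c)$. So $L_+/(\sinh t/t)\to\infty$, and after raising to the fifth power the replacement term $504\,L_+^5\sim 504\bigl(\tfrac{270}{1159}\bigr)^5 c^5\approx 0.346\,c^5$ dominates every negative term: the compensating $-31\sinh^4 t\cdot L_-$ is only $\sim -465\,c^4$ because $L_-\to 15$. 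Hence $\widetilde E\to+\infty$ (already positive once $c\gtrsim 1.3\times 10^3$), even though $E(t)\to-\infty$. Clearing denominators therefore gives a polynomial $\Phi(c)$ whose leading coefficient is positive, not a non-positive polynomial.

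The paper avoids this by the case split you tried to bypass. The structural observation is that $-E(t)=U(\sinh t/t)$ where $U(y)=-504y^5+31(\sinh^4 t)\,y+\text{(const in }y)$ is a quintic in $y$ with a single interior maximum at $y^*=\sqrt[4]{31/2520}\,\sinh t$; one must plug a bound $L$ into $U$ as a whole, which is only legitimate when $L$ and $\sinh t/t$ lie on the same side of $y^*$, and that side flips at $t=\sqrt[4]{2520/31}$. For $t$ large the paper uses the \emph{lower} bound (\ref{sinht/t>1}) (which the proposal never invokes; note also that (\ref{sinht/t>2}) is reserved for Lemma~\ref{Lemma ddddQ-ddQ-Q}), where $U$ is increasing; for $t$ small it uses the \emph{upper} bound (\ref{sinht/t<1}), where $U$ is decreasing. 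Restricting (\ref{sinht/t<1}) to the bounded interval $(0,\sqrt[4]{2520/31}]$ is exactly what keeps the $c^5$ blow-up from occurring. To salvage a term-by-term approach you would at minimum need a tighter upper bound for $\sinh t/t$ at infinity, or you should simply adopt the paper's two-case argument.
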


\begin{proof}
Differentiation yields%
\begin{equation}
Q^{(4)}\left( t\right) =\frac{7}{8}\frac{\cosh ^{2}\frac{1}{2}t}{\sinh ^{3}%
\frac{1}{2}t}-\frac{3}{4}\frac{\cosh ^{4}\frac{1}{2}t}{\sinh ^{5}\frac{1}{2}t%
}-\frac{5}{32\sinh \frac{1}{2}t}+\frac{24}{t^{5}},  \label{ddddQ}
\end{equation}%
and then%
\begin{equation*}
q_{2}\left( t\right) =-\tfrac{1}{336}\tfrac{252t^{5}\cosh ^{4}\frac{t}{2}%
+31t^{4}\sinh ^{5}\frac{t}{2}+37t^{5}\sinh ^{4}\frac{t}{2}-8064\sinh ^{5}%
\frac{t}{2}-294t^{5}\cosh ^{2}\frac{t}{2}\sinh ^{2}\frac{t}{2}}{t^{5}\sinh
^{5}\frac{t}{2}}.
\end{equation*}%
A simple identical transformation gives%
\begin{eqnarray*}
-\left( 672\sinh ^{5}t\right) q_{2}\left( 2t\right) &=&-504\left( \frac{%
\sinh t}{t}\right) ^{5}+31\left( \sinh ^{4}t\right) \frac{\sinh t}{t} \\
&&+504\cosh ^{4}t-588\cosh ^{2}t\sinh ^{2}t+74\sinh ^{4}t.
\end{eqnarray*}%
In order to prove the desired inequality, we define%
\begin{equation*}
U\left( y\right) =-504y^{5}+31(\sinh ^{4}t)y+504\cosh ^{4}t-588\cosh
^{2}t\sinh ^{2}t+74\sinh ^{4}t,
\end{equation*}%
and it suffices to prove that $U\left( \left( \sinh t\right) /t\right) >0$
for $t>0$.

An easy computation yields $U^{\prime }\left( y\right) =31\sinh
^{4}t-2520y^{4}$, which reveals that $U$ is increasing for $y\in \left( 1,%
\sqrt[4]{31/2520}\sinh t\right) $ and decreasing for $y\in \lbrack \sqrt[4]{%
31/2520}\sinh t,\infty )$. We now distinguish two cases to prove $U\left(
\left( \sinh t\right) /t\right) >0$ for $t>0$.

In the case when $t\in \left( \sqrt[4]{2520/31},\infty \right) $, by
inequality (\ref{sinht/t>1}), we have%
\begin{equation*}
1<3\frac{2\cosh t+3}{\cosh t+14}<\frac{\sinh t}{t}<\sqrt[4]{\frac{31}{2520}}%
\sinh t,
\end{equation*}%
that is,%
\begin{equation*}
3\frac{2\cosh t+3}{\cosh t+14},\frac{\sinh t}{t}\in \left( 1,\sqrt[4]{31/2520%
}\sinh t\right) ,
\end{equation*}%
and so%
\begin{eqnarray*}
U\left( \frac{\sinh t}{t}\right) &>&U\left( 3\frac{2\cosh t+3}{\cosh t+14}%
\right) =\left( 504\cosh ^{4}t-588\cosh ^{2}t\sinh ^{2}t+74\sinh ^{4}t\right)
\\
&&+31\left( \sinh ^{4}t\right) \times 3\frac{2\cosh t+3}{\cosh t+14}%
-504\left( 3\frac{2\cosh t+3}{\cosh t+14}\right) ^{5}.
\end{eqnarray*}%
Putting $\cosh t=x$, then $\sinh ^{2}t=x^{2}-1$, and factoring yield%
\begin{equation*}
U\left( \frac{\sinh t}{t}\right) >\frac{\left( x-1\right) ^{3}}{\left(
x+14\right) ^{5}}U_{1}\left( x\right)
\end{equation*}%
where%
\begin{eqnarray*}
U_{1}\left( x\right) &=&176x^{6}+10\,523x^{5}+245\,869x^{4}+2810\,864x^{3} \\
&&+12\,467\,224x^{2}+12\,511\,688x-20\,756\,344.
\end{eqnarray*}%
It is easy to verify that $U_{1}\left( x\right) >U_{1}\left( 1\right)
=7290\,000>0$, which implies that $U\left( \left( \sinh t\right) /t\right)
>0 $ for $t\in \left( \sqrt[4]{2520/31},\infty \right) $.

In the case of when $t\in (0,\sqrt[4]{2520/31}]$, from inequality (\ref%
{sinht/t<1}) we see that%
\begin{equation*}
\infty >15\frac{18\cosh ^{2}t+160\cosh t+179}{1159\cosh ^{2}t+4192\cosh t+4}%
\cosh t>\frac{\sinh t}{t}>\sqrt[4]{\frac{31}{2520}}\sinh t.
\end{equation*}%
Using the decreasing property of $U$ for $u\in \left( \sqrt[4]{31/2520}\sinh
t,\infty \right) $, we have%
\begin{equation*}
U\left( \frac{\sinh t}{t}\right) >U\left( 15\frac{18\cosh ^{2}t+160\cosh
t+179}{1159\cosh ^{2}t+4192\cosh t+4}\cosh t\right) .
\end{equation*}%
Letting $\cosh t=x$ and factoring indicate that%
\begin{equation*}
U\left( 15\frac{18x^{2}+160x+179}{1159x^{2}+4192x+4}x\right) =\frac{\left(
x-1\right) ^{4}}{\left( 1159x^{2}+4192x+4\right) ^{5}}U_{2}\left( x\right) ,
\end{equation*}%
where%
\begin{eqnarray*}
U_{2}\left( x\right)
&=&14\,379\,675\,269\,523\,570x^{11}+357\,214\,567\,270\,415\,330x^{10} \\
&&+3604\,910\,878\,299\,956\,955x^{9}+19\,027\,526\,850\,473\,930\,600x^{8}
\\
&&+55\,570\,610\,110\,726\,848\,080x^{7}+85\,295\,682\,448\,077\,545%
\,696x^{6} \\
&&+54\,079\,668\,524\,631\,977\,864x^{5}+560\,130\,320\,580\,220\,160x^{4} \\
&&+1016\,873\,963\,329\,280x^{3}+923\,378\,178\,560x^{2}+418\,677\,504x+75%
\,776.
\end{eqnarray*}%
Evidently, $U_{2}\left( x\right) >0$, and so $U\left( \left( \sinh t\right)
/t\right) >0$ for $t\in (0,\sqrt[4]{2520/31}]$.

This proof is completed.
\end{proof}

\begin{lemma}
\label{Lemma ddddQ-ddQ-Q}The function $Q$ defined by (\ref{Q}) satisfies%
\begin{equation*}
q_{3}\left( t\right) :=Q^{(4)}\left( t\right) +\frac{11\,165}{8284}Q^{\prime
\prime }\left( t\right) +\frac{199\,849}{1391\,712}Q\left( t\right) >0
\end{equation*}%
for all $t>0$.
\end{lemma}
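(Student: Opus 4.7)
The plan is to proceed in the same spirit as Lemmas \ref{Lemma ddQ/Q>} and \ref{Lemma ddddQ/Q<}. Substituting $Q(t)=1/t-1/(2\sinh(t/2))$, the formula (\ref{ddQ}) for $Q''$, and (\ref{ddddQ}) for $Q^{(4)}$ into the definition of $q_{3}$, multiplying through by the positive quantity $t^{5}\sinh^{5}(t/2)$, and setting $s=t/2$, one obtains an identity of the shape
\[
C\cdot s^{5}\sinh^{5}(s)\,q_{3}(2s) \;=\; \Phi\!\left(\tfrac{\sinh s}{s},\,\cosh s,\,\sinh s\right),
\]
where $\Phi$ is an explicit polynomial in its three arguments. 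This parallels the identity $(80\sinh^{3}t)\,q_{1}(2t) = 20(\sinh t/t)^{3}+7(\sinh^{2}t)(\sinh t/t)+3\sinh^{2}t-20\cosh^{2}t$ used in Lemma \ref{Lemma ddQ/Q>}.

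The specific constants $\alpha=11165/8284$ and $\beta=199849/1391712$ are chosen precisely so that the $t^{1}$ and $t^{3}$ Taylor coefficients of $q_{3}$ at the origin vanish simultaneously. Writing $Q(t)=\sum_{k\geq 1}(-1)^{k-1}b_{k}t^{2k-1}$ with $b_{k}>0$, direct differentiation gives the $t^{2j-1}$-coefficient of $q_{3}$ as $(-1)^{j-1}\bigl[\,b_{j+2}(2j+3)(2j+2)(2j+1)(2j)-\alpha b_{j+1}(2j+1)(2j)+\beta b_{j}\bigr]$. The linear system obtained by setting the $j=1$ and $j=2$ brackets to zero has exactly $(\alpha,\beta)=(11165/8284,\,199849/1391712)$ as its unique solution, while the $j=3$ bracket $3024b_{5}-42\alpha b_{4}+\beta b_{3}$ is strictly positive. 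This vanishing will show up, after clearing denominators, as a factor of $(x-1)^{k}$ (with $x=\cosh s$) in the numerator of $\Phi$.

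I would then split the proof by the size of $t$, exactly as in Lemma \ref{Lemma ddddQ/Q<}: for $t$ above some threshold apply the sharper lower bound (\ref{sinht/t>2}) for $\sinh s/s$, and for $t$ below it apply the upper bound (\ref{sinht/t<1}). Each substitution turns $q_{3}(t)>0$ into a polynomial inequality $P(x)>0$ for $x=\cosh s>1$, which is verified by factoring out the expected $(x-1)^{k}$ and checking that the residual polynomial has nonnegative coefficients when expanded in powers of $x$ or of $x-1$, just as $U_{1}$ and $U_{2}$ were handled in the proof of Lemma \ref{Lemma ddddQ/Q<}.

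The main obstacle is computational: three summands $Q^{(4)},Q'',Q$ together with the unwieldy rational coefficients will generate residual polynomials with very large integer coefficients (more so than the 11th-degree $U_{2}(x)$ of Lemma \ref{Lemma ddddQ/Q<}), and determining the split threshold and checking the correct direction of each substitution demands careful bookkeeping. A pure Taylor-series argument in the style of Lemma \ref{Lemma dQ/Q} is not available here, because a short calculation shows that the coefficient of $t^{7}$ in $q_{3}$ is actually negative, so the passage through the closed forms and the $\sinh t/t$ bounds of Lemma \ref{Lemma sinht/t} is essential.
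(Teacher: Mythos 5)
Your general strategy --- substitute the closed forms for $Q$, $Q''$, $Q^{(4)}$, clear the denominator $t^{5}\sinh^{5}(t/2)$, rewrite in terms of $\cosh t$ and $\sinh t/t$, and invoke the bounds of Lemma~\ref{Lemma sinht/t} --- matches the paper, and your observation that the coefficients $11165/8284$ and $199849/1391712$ are chosen to annihilate the two lowest-order Taylor coefficients of $q_{3}$ is a nice explanation the paper leaves implicit. But the plan breaks at the case split. You modelled it on Lemma~\ref{Lemma ddddQ/Q<}, where a split was forced because the polynomial $U(y)=-504y^{5}+31(\sinh^{4}t)y+\cdots$ has a \emph{negative} leading coefficient in $y=\sinh t/t$ and is therefore not monotone; there the lower bound for $\sinh t/t$ is used on the increasing part of $U$ and the upper bound on the decreasing part. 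Here, after clearing denominators one finds
\begin{equation*}
\bigl(2783424\sinh^{5}t\bigr)\,q_{3}(2t)
= -\bigl(2087568\cosh^{4}t+165829\sinh^{4}t-1497636\cosh^{2}t\sinh^{2}t\bigr)
+ 199849(\sinh^{4}t)\tfrac{\sinh t}{t}
+ 937860(\sinh^{2}t)\bigl(\tfrac{\sinh t}{t}\bigr)^{3}
+ 2087568\bigl(\tfrac{\sinh t}{t}\bigr)^{5},
\end{equation*}
and every coefficient on a power of $\sinh t/t$ is \emph{positive}, so the expression is monotone increasing in $\sinh t/t$ and a single application of the lower bound~(\ref{sinht/t>2}) suffices for all $t>0$. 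If you carried out the split as you describe, the ``small $t$'' branch using the upper bound~(\ref{sinht/t<1}) would push the inequality in the wrong direction and the argument would fail. Once you use the lower bound throughout, putting $x=\cosh t$ and factoring produces $\tfrac{7(x-1)^{5}}{(2x^{2}+101x+212)^{5}}q_{4}(x)$ with a degree-$9$ polynomial $q_{4}$ having all positive coefficients, completing the proof without any case distinction.
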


\begin{proof}
From (\ref{Q}), (\ref{ddQ}) and (\ref{ddddQ}) it is obtained that%
\begin{eqnarray*}
q_{3}\left( t\right) &=&\tfrac{1}{2783\,424t^{5}\sinh ^{5}\frac{t}{2}}%
(-2087\,568t^{5}\cosh ^{4}\frac{t}{2}+1497\,636t^{5}\cosh ^{2}\frac{t}{2}%
\sinh ^{2}\frac{t}{2} \\
&&-165\,829t^{5}\sinh ^{4}\frac{t}{2}+399\,698t^{4}\sinh ^{5}\frac{t}{2} \\
&&+7502\,880t^{2}\sinh ^{5}\frac{t}{2}+66\,802\,176\sinh ^{5}\frac{t}{2}).
\end{eqnarray*}

We write $q_{3}\left( 2t\right) $ as 
\begin{eqnarray*}
&&\left( 2783\,424\sinh ^{5}t\right) q_{3}\left( 2t\right) \\
&=&-\left( 2087\,568\cosh ^{4}t+165\,829\sinh ^{4}t-1497\,636\cosh
^{2}t\sinh ^{2}t\right) \\
&&+199\,849\left( \sinh ^{4}t\right) \tfrac{\sinh t}{t}+937\,860\left( \sinh
^{2}t\right) \left( \tfrac{\sinh t}{t}\right) ^{3}+2087\,568\left( \tfrac{%
\sinh t}{t}\right) ^{5}.
\end{eqnarray*}%
Utilizing inequality (\ref{sinht/t>2}) and putting $\cosh t=x>1$ and then
factoring yield%
\begin{eqnarray*}
\left( 2783\,424\sinh ^{5}t\right) q_{3}\left( 2t\right) &>&-\left(
2087\,568x^{4}+165\,829\left( x^{2}-1\right) ^{2}-1497\,636x^{2}\left(
x^{2}-1\right) \right) \\
&&+199\,849\left( x^{2}-1\right) ^{2}\left( 15\frac{2x^{2}+10x+9}{%
2x^{2}+101x+212}\right) \\
&&+937\,860\left( x^{2}-1\right) \left( 15\frac{2x^{2}+10x+9}{2x^{2}+101x+212%
}\right) ^{3} \\
&&+2087\,568\left( 15\frac{2x^{2}+10x+9}{2x^{2}+101x+212}\right) ^{5} \\
&=&\frac{7\left( x-1\right) ^{5}}{\left( 2x^{2}+101x+212\right) ^{5}}%
q_{4}\left( x\right) >0,
\end{eqnarray*}%
where the last inequality holds due to%
\begin{eqnarray*}
q_{4}\left( x\right)
&=&10\,249\,024x^{9}+2015\,594\,800x^{8}+163\,876\,520\,192x^{7}+6681\,271%
\,280\,040x^{6} \\
&&+136\,012\,433\,414\,956x^{5}+1069\,481\,086\,377\,851x^{4}+4121\,483\,475%
\,973\,500x^{3} \\
&&+8450\,810\,874\,059\,188x^{2}+8899\,895\,239\,232\,240x+3802\,278\,457%
\,617\,584.
\end{eqnarray*}%
This completes the proof.
\end{proof}

\section{Main Results}

Now we state and prove the first result, which shows that the second
conjecture posed by Chen is valid.

\begin{theorem}
\label{MT-ha}Let the function $R$ be defined on $\left( 0,\infty \right) $
by (\ref{R(x)}). Then the function%
\begin{equation*}
h_{a}\left( x\right) =(x+a)^{2}R\left( x\right)
\end{equation*}%
is strictly completely monotonic on $(0,\infty )$ if $a\geq a_{0}=\sqrt{%
c_{0}^{2}+7/40}-c_{0}\approx 0.48476$, where $c_{0}\approx -0.061875$ is
defined by (\ref{c0}).
\end{theorem}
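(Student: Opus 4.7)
The plan is to represent $h_a$ as a constant plus a Laplace transform and then use the two pointwise bounds on $Q$ already proved in Lemmas \ref{Lemma dQ/Q} and \ref{Lemma ddQ/Q>}. Expanding $(x+a)^2 = x^2+2ax+a^2$ and invoking (\ref{R0}), (\ref{R1}), (\ref{R2}) from Lemma \ref{Lemma Rip}, I would write
\begin{equation*}
h_a(x) = x^2R(x)+2ax\,R(x)+a^2 R(x)
      = \frac{1}{24}+\int_0^\infty e^{-xt}K_a(t)\,dt,
\end{equation*}
where $K_a(t):=Q''(t)+2aQ'(t)+a^2Q(t)$. Since the constant $1/24$ is positive and its derivatives all vanish, differentiation under the integral gives $(-1)^n h_a^{(n)}(x)=\int_0^\infty t^n e^{-xt}K_a(t)\,dt$ for every $n\ge 1$, so strict complete monotonicity of $h_a$ on $(0,\infty)$ will follow once I verify that $K_a(t)>0$ for all $t>0$ whenever $a\ge a_0$.

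For the pointwise inequality I would factor out $Q(t)$, which is positive on $(0,\infty)$ because $\sinh(t/2)>t/2$. Thus
\begin{equation*}
\frac{K_a(t)}{Q(t)} \;=\; a^2 + 2a\,\frac{Q'(t)}{Q(t)} + \frac{Q''(t)}{Q(t)}.
\end{equation*}
Now Lemma \ref{Lemma dQ/Q} gives $Q'(t)/Q(t)\ge c_0$, and since $a\ge a_0>0$ this yields $2a\,Q'(t)/Q(t)\ge 2ac_0$. Lemma \ref{Lemma ddQ/Q>} yields the strict bound $Q''(t)/Q(t)>-7/40$. Combining these,
\begin{equation*}
\frac{K_a(t)}{Q(t)} \;>\; a^2+2ac_0-\frac{7}{40}.
\end{equation*}
The right-hand side is a convex quadratic in $a$ with roots $-c_0\pm\sqrt{c_0^2+7/40}$, whose larger root is precisely $a_0=\sqrt{c_0^2+7/40}-c_0$. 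Hence for every $a\ge a_0$ the quadratic is $\ge 0$, and the strict inequality supplied by Lemma \ref{Lemma ddQ/Q>} upgrades this to $K_a(t)>0$ for all $t>0$ (even at the boundary value $a=a_0$). Feeding this back into the Laplace representation above finishes the proof.

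The main obstacle is really already absorbed into Lemmas \ref{Lemma dQ/Q} and \ref{Lemma ddQ/Q>}; the delicate point that remains at the level of this theorem is choosing the sharp splitting of the cross-term $2aQ'/Q$. The choice of the two bounds $c_0$ and $-7/40$ is what forces the threshold $a_0$ to appear with exactly the stated closed form, and it explains why $a_0$ is expressible in terms of $c_0$ and $7/40$ alone. I expect no further difficulty in writing out the details, beyond remarking (which I would briefly do) that the interchange of differentiation and integration in $(-1)^n h_a^{(n)}(x)=\int_0^\infty t^n e^{-xt}K_a(t)\,dt$ is justified by the exponential decay of $e^{-xt}$ together with the polynomial growth of $|K_a(t)|$ from the explicit formulas for $Q,Q',Q''$.
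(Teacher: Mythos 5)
Your proposal is correct and follows exactly the route of the paper's own proof: the same decomposition via relations (\ref{R0})--(\ref{R2}) into $\tfrac{1}{24}+\int_0^\infty e^{-xt}\bigl(Q''(t)+2aQ'(t)+a^2Q(t)\bigr)\,dt$, the same factoring out of $Q(t)>0$, and the same use of Lemmas \ref{Lemma dQ/Q} and \ref{Lemma ddQ/Q>} to reduce the positivity of the integrand to the quadratic inequality $a^2+2ac_0-\tfrac{7}{40}\ge 0$, whose larger root is $a_0$. The only (minor, helpful) additions you make are explicitly noting that $a>0$ is needed to preserve the inequality when multiplying by $2a$, and flagging the justification for differentiating under the integral sign.
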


\begin{proof}
Using the relations (\ref{R0})--(\ref{R2}) we get that%
\begin{eqnarray*}
h_{a}\left( x\right) &=&(x+a)^{2}R\left( x\right) =x^{2}R\left( x\right)
+2axR\left( x\right) +a^{2}R\left( x\right) \\
&=&\frac{1}{24}+\int_{0}^{\infty }e^{-xt}Q^{\prime \prime }\left( t\right)
dt+2a\int_{0}^{\infty }e^{-xt}Q^{\prime }\left( t\right)
dt+a^{2}\int_{0}^{\infty }e^{-xt}Q\left( t\right) dt \\
&=&\frac{1}{24}+\int_{0}^{\infty }e^{-xt}\left( Q^{\prime \prime }\left(
t\right) +2aQ^{\prime }\left( t\right) +a^{2}Q\left( t\right) \right) dt \\
&&\overset{\vartriangle }{=}\frac{1}{24}+\int_{0}^{\infty }e^{-xt}Q\left(
t\right) \delta _{a}\left( t\right) dt.
\end{eqnarray*}%
Evidently, for $t>0$, $Q\left( t\right) >0$ and by Lemmas \ref{Lemma dQ/Q}
and \ref{Lemma ddQ/Q>}, 
\begin{eqnarray*}
\delta _{a}\left( t\right) &=&\frac{Q^{\prime \prime }\left( t\right) }{%
Q\left( t\right) }+2a\frac{Q^{\prime }\left( t\right) }{Q\left( t\right) }%
+a^{2}\geq a^{2}+2ac_{0}-\frac{7}{40} \\
&=&\left( a+c_{0}+\sqrt{c_{0}^{2}+\frac{7}{40}}\right) \left( a+c_{0}-\sqrt{%
c_{0}^{2}+\frac{7}{40}}\right) \geq 0
\end{eqnarray*}%
if $a\geq \sqrt{c_{0}^{2}+7/40}-c_{0}$, which proves the desired result.
\end{proof}

Taking $a=1/2$ and replacing $x$ by $\left( x+1/2\right) $ in the above
theorem, we have

\begin{corollary}
The function $\left( (x+1)/x\right) ^{2}H(x)$ is strictly completely
monotonic on $(-1/2,\infty )$.
\end{corollary}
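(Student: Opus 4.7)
The plan is to derive the corollary as an immediate consequence of Theorem \ref{MT-ha} via a translation argument. First, I would record the numerical inequality $a_0 \le 1/2$: since $c_0\approx -0.061875$, we have $c_0^2+7/40\approx 0.17883$, so $\sqrt{c_0^2+7/40}-c_0\approx 0.42288+0.06188=0.48476<1/2$. Thus $a=1/2$ satisfies the hypothesis of Theorem \ref{MT-ha}, and the function
\[
h_{1/2}(x)=(x+\tfrac12)^2R(x)
\]
is strictly completely monotonic on $(0,\infty)$.

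Next I would perform the translation $x\mapsto x+1/2$. Complete monotonicity is invariant under translation of the argument, because if $f$ is strictly CM on $(0,\infty)$, then $g(x):=f(x+1/2)$ satisfies $(-1)^n g^{(n)}(x)=(-1)^n f^{(n)}(x+1/2)>0$ for every $x>-1/2$ and every $n\ge 0$. Hence
\[
h_{1/2}(x+\tfrac12)=(x+1)^2R(x+\tfrac12)
\]
is strictly completely monotonic on $(-1/2,\infty)$.

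Finally I would identify this shifted function with $\bigl((x+1)/x\bigr)^2 H(x)$. From \eqref{R(x)} we get $R(x+1/2)=\psi(x+1)-\ln(x+1/2)$, so the natural continuous extension of the sequence defined in \eqref{H(n)} is
\[
H(x)=x^2\bigl(\psi(x+1)-\ln(x+\tfrac12)\bigr)=x^2 R(x+\tfrac12).
\]
Therefore $\bigl((x+1)/x\bigr)^2H(x)=(x+1)^2R(x+1/2)$, which is the CM function just produced. (The apparent singularity at $x=0$ is removable, with value $R(1/2)=\ln 2-\gamma$.)

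There is essentially no obstacle: the whole argument is a substitution plus an algebraic rearrangement, the only substantive input being the already-established Theorem \ref{MT-ha}. The one place that deserves explicit attention is the numerical check $a_0\le 1/2$, since the inequality is tight (the gap is only about $0.015$) and it is precisely this near-equality that makes $a=1/2$ — the value singled out by Chen's conjecture — fall inside the admissible range.
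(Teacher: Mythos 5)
Your proof is correct and takes exactly the paper's route: set $a=1/2$ in Theorem \ref{MT-ha} (valid since $a_0\approx 0.48476<1/2$), then shift the argument by $1/2$ and identify $(x+1)^2R(x+1/2)$ with $\bigl((x+1)/x\bigr)^2H(x)$. The paper states this in one line; your write-up simply makes the translation-invariance of complete monotonicity and the algebraic identification explicit.
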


\begin{remark}
Evidently, Theorem \ref{MT-ha} reveals that the second conjecture posed by
Chen in \cite[Theorem 2]{Chen-JMI-3(1)-2009} is true.
\end{remark}

Our second result states that

\begin{theorem}
\label{MT-Fa}Let the function $R$ be defined on $\left( 0,\infty \right) $
by (\ref{R(x)}). Then the function%
\begin{equation*}
x\mapsto F_{a}\left( x\right) =24\left( x^{2}+a\right) R\left( x\right) -1
\end{equation*}%
is strictly completely monotonic on $\left( 0,\infty \right) $ if and only
if $a\geq a_{1}=17/40$.
\end{theorem}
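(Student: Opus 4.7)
The plan is to follow the same template as the proof of Theorem~\ref{MT-ha}: realize $F_a$ as a Laplace transform via the integral identities of Lemma~\ref{Lemma Rip}, and then invoke Bernstein's theorem together with the uniqueness of the Laplace representation to reduce strict complete monotonicity to a single pointwise inequality for the kernel.

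Combining (\ref{R0}) and (\ref{R2}), the constant $1$ produced by $24 x^2 R(x) = 1 + 24\int_0^\infty e^{-xt} Q''(t)\,dt$ cancels precisely the trailing $-1$ in $F_a$, yielding
\[
F_a(x) \;=\; 24 \int_0^\infty e^{-xt}\,\bigl[\,Q''(t) + a\,Q(t)\,\bigr]\,dt.
\]
Since $2\sinh(t/2) > t$ forces $Q(t) > 0$ on $(0,\infty)$, Bernstein's theorem together with the uniqueness of Laplace transforms gives the equivalence
\[
F_a \text{ is strictly CM on } (0,\infty) \iff Q''(t) + a\,Q(t) \geq 0 \ \text{ for all } t > 0,
\]
equivalently, $a \geq a_1 := \sup_{t>0}\bigl(-Q''(t)/Q(t)\bigr)$.

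For the sufficiency direction at $a \geq 17/40$ I would use the decomposition
\[
Q''(t) + a\,Q(t) \;=\; \bigl[\,Q''(t) + \tfrac{7}{40}\,Q(t)\,\bigr] \;+\; \bigl(a-\tfrac{7}{40}\bigr)\,Q(t),
\]
in which the first bracket is strictly positive by Lemma~\ref{Lemma ddQ/Q>} and the second term is strictly positive since $a \geq 17/40 > 7/40$ and $Q > 0$. The Laplace kernel is then strictly positive on $(0,\infty)$, so $F_a$ is strictly completely monotonic by Bernstein's theorem.

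The main obstacle is the necessity, i.e., the sharpness claim $a_1 = 17/40$: for each $a < 17/40$ one must exhibit some $t > 0$ at which $Q''(t) + a\,Q(t) < 0$, which by Laplace uniqueness precludes the Bernstein representation for $F_a$. To pin the supremum $\sup_{t>0}\bigl(-Q''(t)/Q(t)\bigr)$ down to the stated value, I would differentiate the ratio $-Q''(t)/Q(t)$ to classify its critical points, combine the boundary analyses at $t\to 0^+$ (Taylor expansion of $Q$ and $Q''$, giving $Q(t) = t/24 - 7 t^3/5760 + O(t^5)$ and $Q''(t) = -7t/960 + O(t^3)$) and at $t\to\infty$ (using $Q(t) \sim 1/t$, $Q''(t)\sim 2/t^3$, so the ratio tends to $0$), and then identify the extremal $t_\ast$ at which the supremum is attained; establishing analytically that $-Q''(t_\ast)/Q(t_\ast) = 17/40$, via an explicit hyperbolic identity in the style of Lemmas~\ref{Lemma sinht/t}--\ref{Lemma ddddQ-ddQ-Q}, is the delicate part of the argument.
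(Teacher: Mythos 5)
Your Laplace representation $F_{a}\left( x\right) =24\int_{0}^{\infty }e^{-xt}\left( Q^{\prime \prime }\left( t\right) +aQ\left( t\right) \right) dt$ and your sufficiency argument coincide with the paper's (the paper does not even need your two--term split: for $a\geq 7/40$ one has $Q^{\prime \prime }+aQ\geq Q^{\prime \prime }+\frac{7}{40}Q>0$ directly by Lemma \ref{Lemma ddQ/Q>}). The genuine gap is the necessity, which you defer to ``the delicate part'' and, more seriously, aim at a value the kernel never attains. Your own Taylor data $Q\left( t\right) =t/24+O\left( t^{3}\right) $ and $Q^{\prime \prime }\left( t\right) =-7t/960+O\left( t^{3}\right) $ give
\begin{equation*}
\lim_{t\rightarrow 0^{+}}\left( -\frac{Q^{\prime \prime }\left( t\right) }{Q\left( t\right) }\right) =\frac{7/960}{1/24}=\frac{7}{40},
\end{equation*}
while Lemma \ref{Lemma ddQ/Q>} says $-Q^{\prime \prime }\left( t\right) /Q\left( t\right) <7/40$ for \emph{every} $t>0$. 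Hence $\sup_{t>0}\left( -Q^{\prime \prime }/Q\right) =7/40$, the supremum is a boundary limit at $t=0^{+}$ and is not attained at any interior point $t_{\ast }$; your program of classifying critical points of the ratio and exhibiting $t_{\ast }$ with $-Q^{\prime \prime }\left( t_{\ast }\right) /Q\left( t_{\ast }\right) =17/40$ cannot succeed. (The value $a_{1}=17/40$ in the theorem statement is evidently a misprint for $7/40$: the paper's necessity computation yields exactly $a-\frac{7}{40}\geq 0$, its sufficiency is proved for $a\geq 7/40$, and the ensuing corollary uses $F_{7/40}$.)

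You should also note that the paper's necessity argument is far lighter than the route you sketch. A completely monotone function is nonnegative, so it suffices to observe that
\begin{equation*}
\lim_{x\rightarrow \infty }\frac{F_{a}\left( x\right) }{x^{-2}}=a-\frac{7}{40}\geq 0,
\end{equation*}
which follows at once from the expansion $R\left( x\right) =\frac{1}{24x^{2}}-\frac{7}{960x^{4}}+O\left( x^{-6}\right) $ (itself immediate from (\ref{R2}) and (\ref{R4})). No uniqueness of the Laplace representation and no analysis of $-Q^{\prime \prime }/Q$ is needed. Your kernel--based equivalence is logically sound and could be completed (the $t\rightarrow 0^{+}$ behaviour of the kernel mirrors the $x\rightarrow \infty $ behaviour of $F_{a}$), but as written the necessity is both unexecuted and pointed at the wrong constant.
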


\begin{proof}
The necessity follows from%
\begin{equation*}
\lim_{x\rightarrow \infty }\frac{F_{a}\left( x\right) }{x^{-2}}%
=\lim_{x\rightarrow \infty }\frac{24\left( x^{2}+a\right) \left( \psi \left(
x+1/2\right) -\ln x\right) -1}{x^{-2}}=a-\frac{7}{40}\geq 0.
\end{equation*}%
Using the relations (\ref{R0}) and (\ref{R2}) we obtain that%
\begin{eqnarray*}
F_{a}\left( x\right) &=&24\left( x^{2}+a\right) R\left( x\right)
-1=24x^{2}R\left( x\right) +24aR\left( x\right) -1 \\
&=&24\left( \frac{1}{24}+\int_{0}^{\infty }e^{-xt}Q^{\prime \prime }\left(
t\right) dt\right) +24a\int_{0}^{\infty }e^{-xt}Q\left( t\right) dt-1 \\
&=&24\int_{0}^{\infty }e^{-xt}\left( Q^{\prime \prime }\left( t\right)
+aQ\left( t\right) \right) dt.
\end{eqnarray*}%
If $a\geq 7/40$, then by Lemma \ref{Lemma ddQ/Q>} it follows that 
\begin{equation*}
Q^{\prime \prime }\left( t\right) +aQ\left( t\right) \geq Q^{\prime \prime
}\left( t\right) +\frac{7}{40}Q\left( t\right) >0
\end{equation*}%
for $t>0$, which proves the sufficiency.
\end{proof}

Noting that%
\begin{equation*}
F_{7/40}\left( n+\frac{1}{2}\right) =24\left( \left( n+1/2\right)
^{2}+7/40\right) R_{n}-1
\end{equation*}%
and the facts that%
\begin{equation*}
F_{7/40}\left( 3/2\right) =\frac{286}{5}-\frac{291}{5}\ln \frac{3}{2}-\frac{%
291}{5}\gamma \approx 0.007979\text{ \ and \ }F_{7/40}\left( \infty \right)
=0,
\end{equation*}%
by Theorem \ref{MT-fa} we get immediately

\begin{corollary}
Let $R_{n}$ be defined by (\ref{R_n}). Then the double inequality%
\begin{equation}
\frac{1}{24\left( \left( n+1/2\right) ^{2}+7/40\right) }<R_{n}-\gamma <\frac{%
1+\lambda _{1}}{24\left( \left( n+1/2\right) ^{2}+7/40\right) }  \label{D1}
\end{equation}%
holds for $n\in \mathbb{N}$, where $\lambda _{1}=f_{7/40}\left( 3/2\right)
\approx 0.007979$ is the best constant.
\end{corollary}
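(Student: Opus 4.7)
The plan is to derive the double inequality as an immediate consequence of Theorem~\ref{MT-Fa} at the boundary value $a = 7/40$. By that theorem, $F_{7/40}$ is strictly completely monotonic on $(0,\infty)$; in particular $F_{7/40}(x) > 0$ for all $x > 0$, and since $-F_{7/40}'$ is itself completely monotonic and nontrivial, $F_{7/40}$ is strictly decreasing. Combining these facts with the limit $F_{7/40}(\infty) = 0$ recorded just above the statement, I would conclude that $F_{7/40}$ decreases strictly from a positive value down to $0$ on $(0,\infty)$.

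Next I would pass from $R$ to $R_n - \gamma$ using the identity $R(n+1/2) = \psi(n+1) - \ln(n+1/2) = H_n - \gamma - \ln(n+1/2) = R_n - \gamma$, which yields
\[
F_{7/40}(n+1/2) = 24\bigl((n+1/2)^{2} + 7/40\bigr)(R_n - \gamma) - 1.
\]
Since $n + 1/2 \geq 3/2$ for every $n \in \mathbb{N}$, the strict monotonicity of $F_{7/40}$ gives
\[
0 < F_{7/40}(n+1/2) \leq F_{7/40}(3/2) = \lambda_1,
\]
with the right-hand inequality attained at $n = 1$. Rearranging produces the claimed two-sided bound on $R_n - \gamma$.

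For the best-constant assertion I would argue in two parts. Since $F_{7/40}(n+1/2) \to 0$ as $n \to \infty$, any constant strictly larger than $1$ in the numerator of the lower bound must fail for all sufficiently large $n$; hence $1$ is optimal. On the other hand, the upper bound is saturated at $n = 1$, so the constant $1 + \lambda_1$ in the numerator cannot be decreased.

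The main obstacle is essentially cosmetic: the two analytic ingredients---complete monotonicity of $F_{7/40}$ and the limit $F_{7/40}(\infty) = 0$---are already in hand from Theorem~\ref{MT-Fa} and its proof, and the remainder of the argument is the functional identity $R(n+1/2) = R_n - \gamma$ together with a monotonicity comparison. The only small subtlety to flag is that the stated strict $<$ in the upper bound is in fact an equality at $n = 1$; this is customary in the \emph{best constant} formulation and does not affect the proof.
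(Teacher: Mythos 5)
Your argument matches the paper's: both derive the double inequality by evaluating $F_{7/40}$ at $x = n+1/2$, using the strict complete monotonicity (hence positivity and strict decrease) of $F_{7/40}$ from Theorem~\ref{MT-Fa} together with $F_{7/40}(\infty)=0$ and $F_{7/40}(3/2)=\lambda_1$, then rearranging. You also correctly flag the small slips the paper glosses over: the displayed identity should read $R_n-\gamma$ rather than $R_n$, the citation should be to Theorem~\ref{MT-Fa} rather than Theorem~\ref{MT-fa}, and the right-hand inequality is actually an equality at $n=1$.
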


Our third result is contained in the following theorem.

\begin{theorem}
\label{MT-fa}Let the function $R$ be defined on $\left( 0,\infty \right) $
by (\ref{R(x)}). Then the function%
\begin{equation*}
x\mapsto f_{a}\left( x\right) =-24\left( x^{4}+a\right) R\left( x\right)
+x^{2}-\frac{7}{40},
\end{equation*}%
is strictly completely monotonic on $\left( 0,\infty \right) $ is and only
if $a\leq a_{2}=-31/336$.
\end{theorem}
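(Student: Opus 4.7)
The plan is to mirror the proof of Theorem \ref{MT-Fa}, splitting the argument into a necessity half handled by matching asymptotic coefficients at $x = \infty$, and a sufficiency half handled by a Laplace representation of $f_a$ combined with Lemma \ref{Lemma ddddQ/Q<}.

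For necessity, I would first record the asymptotic expansion
\[ R(x) = \frac{1}{24 x^2} - \frac{7}{960 x^4} + \frac{31}{8064 x^6} + O\bigl(x^{-8}\bigr) \qquad (x \to \infty), \]
which drops out of \eqref{R0} by Watson's lemma applied to the Taylor series of $Q$ at $t = 0$ (equivalently, from the Stirling expansion of $\psi$). A short substitution into the definition of $f_a$ then gives
\[ \lim_{x\to\infty} x^2 f_a(x) = -a - \frac{31}{336}. \]
Since strict complete monotonicity forces $f_a > 0$ on $(0,\infty)$, this limit must be $\ge 0$, which pins down $a \le -31/336 = a_2$.

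For sufficiency, I would assume $a \le -31/336$ and combine the integral relations \eqref{R0} and \eqref{R4} directly. Writing
\[ f_a(x) = -24\, x^4 R(x) - 24 a R(x) + x^2 - \frac{7}{40}, \]
the role of the additive terms $x^2$ and $-7/40$ is that when $x^4 R(x)$ is replaced via \eqref{R4}, the non-integral pieces $-x^2 + 7/40$ cancel exactly, leaving
\[ f_a(x) = -24 \int_0^\infty e^{-xt} \bigl[Q^{(4)}(t) + a Q(t)\bigr]\, dt. \]
Because $Q(t) > 0$ on $(0,\infty)$ and $a \le -31/336$, Lemma \ref{Lemma ddddQ/Q<} yields $Q^{(4)}(t) + a Q(t) \le Q^{(4)}(t) - \tfrac{31}{336} Q(t) < 0$ for $t > 0$, so the Laplace kernel is strictly positive and Bernstein's theorem delivers strict complete monotonicity of $f_a$. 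The genuine difficulty is entirely absorbed into Lemma \ref{Lemma ddddQ/Q<}: given that sharp comparison (whose proof required the hyperbolic estimates in Lemma \ref{Lemma sinht/t} split across the threshold $\sqrt[4]{2520/31}$), everything here is bookkeeping, and the constant $-31/336$ on both sides of the ``if and only if'' arises from the very same asymptotic coefficient of $R$ that drives Lemma \ref{Lemma ddddQ/Q<}.
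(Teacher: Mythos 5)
Your proposal reproduces the paper's proof essentially verbatim: the necessity follows from the limit $\lim_{x\to\infty} x^{2} f_{a}(x) = -a - 31/336 \ge 0$ (your explicit Stirling-type expansion of $R$ with the coefficient $31/8064$ is exactly what underlies the paper's one-line limit), and the sufficiency combines \eqref{R0} and \eqref{R4} to cancel the polynomial terms and reduce $f_{a}$ to the Laplace transform of $-24\bigl[Q^{(4)}(t)+aQ(t)\bigr]$, whose positivity is supplied by Lemma \ref{Lemma ddddQ/Q<}. The argument is correct and matches the paper's approach.
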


\begin{proof}
The necessity can be deduced by%
\begin{equation*}
\lim_{x\rightarrow \infty }\frac{f_{a}\left( x\right) }{x^{-2}}%
=\lim_{x\rightarrow \infty }\frac{-24\left( x^{4}+a\right) \left( \psi
\left( x+1/2\right) -\ln x\right) +x^{2}-\frac{7}{40}}{x^{-2}}=-a-\frac{31}{%
336}\geq 0.
\end{equation*}%
By the relations (\ref{R0}), (\ref{R2}) and (\ref{R4}), we obtain that%
\begin{eqnarray*}
f_{a}\left( x\right) &=&-24x^{4}R\left( x\right) -24aR\left( x\right) +x^{2}-%
\frac{7}{40} \\
&=&-24\left( \frac{1}{24}x^{2}-\frac{7}{960}+\int_{0}^{\infty
}e^{-xt}Q^{(4)}\left( t\right) dt\right) -24a\int_{0}^{\infty
}e^{-xt}Q\left( t\right) dt+x^{2}-\frac{7}{40} \\
&=&24\int_{0}^{\infty }e^{-xt}\left( -Q^{(4)}\left( t\right) -aQ\left(
t\right) \right) dt.
\end{eqnarray*}%
If $a\leq a_{2}=-31/336$, then by Lemma \ref{Lemma ddddQ/Q<} it follows that%
\begin{equation*}
-Q^{(4)}\left( t\right) -aQ\left( t\right) \geq -Q^{(4)}\left( t\right) +%
\frac{31}{336}Q\left( t\right) >0,
\end{equation*}%
which proves the sufficiency.
\end{proof}

Utilizing the decreasing property of $f_{a_{2}}$ and noting the facts that%
\begin{equation*}
f_{a_{2}}\left( \frac{3}{2}\right) =\frac{835}{7}\gamma +\frac{835}{7}\ln 
\frac{3}{2}-\frac{32\,819}{280}\approx 0.0090636\text{ and }f_{a_{2}}\left(
\infty \right) =0,
\end{equation*}%
we have

\begin{corollary}
Let $R_{n}$ be defined by (\ref{R_n}). Then the double inequality%
\begin{equation}
\frac{1}{24}\frac{\left( n+\frac{1}{2}\right) ^{2}-\frac{7}{40}-\lambda _{2}%
}{\left( n+\frac{1}{2}\right) ^{4}-\frac{31}{336}}<R_{n}-\gamma <\frac{1}{24}%
\frac{\left( n+\frac{1}{2}\right) ^{2}-\frac{7}{40}}{\left( n+\frac{1}{2}%
\right) ^{4}-\frac{31}{336}}  \label{D2}
\end{equation}%
holds for $n\in \mathbb{N}$, where $\lambda _{2}=f_{a_{2}}\left( 3/2\right)
\approx 0.0090636$ is the best constant.
\end{corollary}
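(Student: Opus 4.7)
The plan is to read off (\ref{D2}) directly from the strict complete monotonicity of $f_{a_2}$ furnished by Theorem \ref{MT-fa}, together with the observation that $\psi(n+1)=H_n-\gamma$ converts $R$-values into $R_n$-values: specifically, by the definition of $R_n$ in (\ref{R_n}),
\[
R\!\left(n+\tfrac{1}{2}\right) = \psi(n+1) - \ln\!\left(n+\tfrac{1}{2}\right) = H_n - \gamma - \ln\!\left(n+\tfrac{1}{2}\right) = R_n - \gamma .
\]
Solving the defining identity
\[
f_{a_2}(x) = -24\!\left(x^{4} - \tfrac{31}{336}\right) R(x) + x^{2} - \tfrac{7}{40}
\]
for $R(x)$ yields
\[
R(x) = \frac{1}{24}\cdot\frac{x^{2} - \tfrac{7}{40} - f_{a_2}(x)}{x^{4} - \tfrac{31}{336}} ,
\]
and since $(n+1/2)^{4} \geq (3/2)^{4} = 81/16 \gg 31/336$ for every $n\geq 1$ the denominator is strictly positive, so evaluating at $x=n+1/2$ produces the key identity
\[
R_n - \gamma \;=\; \frac{1}{24}\cdot\frac{(n+1/2)^{2} - \tfrac{7}{40} - f_{a_2}(n+1/2)}{(n+1/2)^{4} - \tfrac{31}{336}} .
\]

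With this identity in hand, both inequalities in (\ref{D2}) reduce to two-sided control of the single quantity $f_{a_2}(n+1/2)$. By Theorem \ref{MT-fa}, $f_{a_2}$ is strictly completely monotone on $(0,\infty)$, so in particular $f_{a_2}(x) > 0$ (the $n=0$ case of the definition) and $-f_{a_2}'(x) > 0$, whence $f_{a_2}$ is strictly decreasing. Substituting $f_{a_2}(n+1/2) > 0$ into the identity gives the upper bound in (\ref{D2}), while strict monotonicity gives $f_{a_2}(n+1/2) \leq f_{a_2}(3/2) = \lambda_2$ for every $n \geq 1$ (with strict inequality for $n\geq 2$ and equality at $n=1$), and this yields the lower bound.

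It remains to record that $\lambda_2$ is the best constant in the lower bound: equality is attained at $n = 1$ by the very definition $\lambda_2 := f_{a_2}(3/2)$, so no smaller value of $\lambda$ can be admissible. Since the entire argument is an algebraic rearrangement combined with the positivity and monotonicity of $f_{a_2}$ already established in Theorem \ref{MT-fa}, the proof involves no serious obstacle; the only ingredient meriting a brief check is the positivity of $(n+1/2)^{4} - 31/336$ for $n \geq 1$, which is numerical and immediate.
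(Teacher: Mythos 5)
Your proposal is correct and follows essentially the same route as the paper: Theorem \ref{MT-fa} gives that $f_{a_{2}}$ is positive and strictly decreasing, the identity $R(n+1/2)=R_{n}-\gamma$ converts the bounds $0<f_{a_{2}}(n+1/2)\leq f_{a_{2}}(3/2)=\lambda_{2}$ into (\ref{D2}), and best-possibility comes from attainment at $n=1$. Your observation that the left-hand inequality is actually an equality at $n=1$ is a fair point about the strictness of ``$<$'' as stated, but that is an imprecision in the corollary's formulation rather than in your argument.
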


\begin{remark}
The upper bound for $R_{n}-\gamma $ given in (\ref{D2}) is better than the
one given in (\ref{De2}), because%
\begin{eqnarray*}
&&\frac{1}{24}\frac{\left( n+\frac{1}{2}\right) ^{2}-\frac{7}{40}}{\left( n+%
\frac{1}{2}\right) ^{4}-\frac{31}{336}}-\left( \frac{1}{24n^{2}}+\frac{7}{960%
}\frac{1}{n^{4}}\right) \\
&=&-\frac{6720n^{5}+10\,752n^{4}+5712n^{3}+1564n^{2}+588n-35}{960n^{4}\left(
168n^{4}+336n^{3}+252n^{2}+84n-5\right) }<0.
\end{eqnarray*}
\end{remark}

The last result is the following theorem

\begin{theorem}
\label{MT-Ga}Let the function $R$ be defined on $\left( 0,\infty \right) $
by (\ref{R(x)}). Then the function%
\begin{equation}
x\mapsto G_{a}\left( x\right) =24\left( x^{4}+ax^{2}+\frac{7}{40}a-\frac{31}{%
336}\right) R\left( x\right) -\left( x^{2}-\frac{7}{40}+a\right)  \label{Ga}
\end{equation}%
is strictly completely monotonic on $\left( 0,\infty \right) $ if and only
if $a\geq a_{3}=11165/8284$.
\end{theorem}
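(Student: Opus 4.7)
The strategy is exactly parallel to the proofs of Theorems~\ref{MT-Fa} and~\ref{MT-fa}: reduce $G_a$ to a Laplace transform of an explicit linear combination of $Q$, $Q''$, $Q^{(4)}$, then apply Lemmas~\ref{Lemma ddQ/Q>} and~\ref{Lemma ddddQ-ddQ-Q} together with Bernstein's theorem for sufficiency, and use an asymptotic expansion of $R(x)$ for necessity.

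For the sufficiency, I would expand
\begin{equation*}
G_a(x)=24x^4R(x)+24ax^2R(x)+24\Bigl(\tfrac{7a}{40}-\tfrac{31}{336}\Bigr)R(x)-\Bigl(x^2-\tfrac{7}{40}+a\Bigr)
\end{equation*}
and substitute the integral representations of $x^4R(x)$, $x^2R(x)$ and $R(x)$ from Lemma~\ref{Lemma Rip}. The polynomial remnants produced by the first three terms add up to precisely $x^2-\tfrac{7}{40}+a$, which cancels the final bracket, leaving
\begin{equation*}
G_a(x)=24\int_0^\infty e^{-xt}\,\phi_a(t)\,dt,\qquad \phi_a(t):=Q^{(4)}(t)+aQ''(t)+\Bigl(\tfrac{7a}{40}-\tfrac{31}{336}\Bigr)Q(t).
\end{equation*}
A short arithmetic check gives $\tfrac{7a_3}{40}-\tfrac{31}{336}=\tfrac{199849}{1391712}$, so $\phi_{a_3}$ coincides exactly with the function $q_3$ of Lemma~\ref{Lemma ddddQ-ddQ-Q} and is strictly positive on $(0,\infty)$. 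For general $a\geq a_3$ the elementary identity
\begin{equation*}
\phi_a(t)-\phi_{a_3}(t)=(a-a_3)\Bigl[Q''(t)+\tfrac{7}{40}Q(t)\Bigr]
\end{equation*}
together with the positivity supplied by Lemma~\ref{Lemma ddQ/Q>} shows $\phi_a>0$, and Bernstein's theorem then delivers strict complete monotonicity of $G_a$.

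For the necessity, I would apply Watson's lemma to $R(x)=\int_0^\infty e^{-xt}Q(t)\,dt$ using the Taylor expansion of $Q$ read off from the standard series of $(t/2)/\sinh(t/2)$, producing
\begin{equation*}
R(x)\sim\frac{1}{24x^2}-\frac{7}{960x^4}+\frac{31}{8064x^6}-\frac{127}{30720x^8}+\cdots\qquad(x\to\infty).
\end{equation*}
The coefficients in the definition of $G_a$ are calibrated so that the $x^2$, constant, and $x^{-2}$ terms in the expansion of $G_a$ all vanish, and a direct computation produces
\begin{equation*}
\lim_{x\to\infty}x^4G_a(x)=-\tfrac{319}{3840}+\tfrac{2071}{33600}\,a.
\end{equation*}
Complete monotonicity forces $G_a\geq 0$, hence this limit must be nonnegative, which rearranges to $a\geq\tfrac{319\cdot 33600}{3840\cdot 2071}=\tfrac{11165}{8284}=a_3$.

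The main obstacle is purely the bookkeeping: verifying the rational identity $\tfrac{7a_3}{40}-\tfrac{31}{336}=\tfrac{199849}{1391712}$ that routes the problem through Lemma~\ref{Lemma ddddQ-ddQ-Q}, and correctly isolating the $x^{-4}$ coefficient in the asymptotic expansion, where three separate contributions compete and must leave precisely $\tfrac{2071}{33600}a-\tfrac{319}{3840}$. Conceptually, once the lemmas of Section~2 are in hand, the proof is a clean assembly of these two computations.
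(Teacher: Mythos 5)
Your proposal is correct and follows essentially the same route as the paper: substitute the integral representations from Lemma~\ref{Lemma Rip} to obtain $G_a(x)=24\int_0^\infty e^{-xt}\bigl(Q^{(4)}+aQ''+(\tfrac{7a}{40}-\tfrac{31}{336})Q\bigr)dt$, then apply Lemmas~\ref{Lemma ddQ/Q>} and~\ref{Lemma ddddQ-ddQ-Q} for sufficiency and the large-$x$ asymptotics of $R$ for necessity. The paper phrases the sufficiency bound as $g_a=(Q^{(4)}-\tfrac{31}{336}Q)+a(Q''+\tfrac{7}{40}Q)\geq(Q^{(4)}-\tfrac{31}{336}Q)+a_3(Q''+\tfrac{7}{40}Q)=q_3>0$, while you write $\phi_a=\phi_{a_3}+(a-a_3)(Q''+\tfrac{7}{40}Q)$; these are the same estimate. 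All your arithmetic (the identity $\tfrac{7a_3}{40}-\tfrac{31}{336}=\tfrac{199849}{1391712}$ and the limit $\tfrac{2071}{33600}a-\tfrac{319}{3840}$, equal to the paper's $\tfrac{2071}{33600}(a-\tfrac{11165}{8284})$) checks out.
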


\begin{proof}
The necessity can be deduced by%
\begin{eqnarray*}
\lim_{x\rightarrow \infty }\frac{G_{a}\left( x\right) }{x^{-4}}
&=&\lim_{x\rightarrow \infty }\tfrac{24\left( x^{4}+ax^{2}+\frac{7}{40}a-%
\frac{31}{336}\right) \left( \func{Psi}\left( x+1/2\right) -\ln x\right)
-\left( x^{2}-\frac{7}{40}+a\right) }{x^{-4}} \\
&=&\tfrac{2071}{33\,600}\left( a-\tfrac{11\,165}{8284}\right) \geq 0.
\end{eqnarray*}%
By the relations (\ref{R0}), (\ref{R2}) and (\ref{R4}) we obtain that%
\begin{eqnarray*}
G_{a}\left( x\right) &=&24x^{4}R\left( x\right) +24ax^{2}R\left( x\right)
+24\left( \frac{7}{40}a-\frac{31}{336}\right) R\left( x\right) -\left( x^{2}-%
\frac{7}{40}+a\right) \\
&=&24\left( \frac{1}{24}x^{2}-\frac{7}{960}+\int_{0}^{\infty
}e^{-xt}Q^{(4)}\left( t\right) dt\right) +24a\left( \frac{1}{24}%
+\int_{0}^{\infty }e^{-xt}Q^{\prime \prime }\left( t\right) dt\right) \\
&&+24\left( \frac{7}{40}a-\frac{31}{336}\right) \int_{0}^{\infty
}e^{-xt}Q\left( t\right) dt-\left( x^{2}-\frac{7}{40}+a\right) \\
&=&24\int_{0}^{\infty }e^{-xt}\left( Q^{(4)}\left( t\right) +aQ^{\prime
\prime }\left( t\right) +\left( \frac{7}{40}a-\frac{31}{336}\right) Q\left(
t\right) \right) dt \\
&&\overset{\vartriangle }{=}24\int_{0}^{\infty }e^{-xt}g_{a}\left( t\right)
dt.
\end{eqnarray*}

If $a\geq a_{3}=11165/8284$, then it follows from Lemmas \ref{Lemma ddQ/Q>}
and \ref{Lemma ddddQ-ddQ-Q} that%
\begin{eqnarray*}
g_{a}\left( t\right) &=&Q^{(4)}\left( t\right) -\frac{31}{336}Q\left(
t\right) +a\left( Q^{\prime \prime }\left( t\right) +\frac{7}{40}Q\left(
t\right) \right) \\
&\geq &Q^{(4)}\left( t\right) -\frac{31}{336}Q\left( t\right) +\tfrac{11\,165%
}{8284}\left( Q^{\prime \prime }\left( t\right) +\frac{7}{40}Q\left(
t\right) \right) \\
&=&Q^{(4)}\left( t\right) +\tfrac{11\,165}{8284}Q^{\prime \prime }\left(
t\right) +\frac{199\,849}{1391\,712}Q\left( t\right) >0,
\end{eqnarray*}%
which proves the sufficiency.
\end{proof}

Application of the decreasing property of $G_{a_{3}}$ and notice the facts
that%
\begin{equation*}
G_{a_{3}}\left( \frac{3}{2}\right) =\frac{112\,672\,809}{579\,880}-\frac{%
11\,465\,761}{57\,988}\ln \frac{3}{2}-\frac{11\,465\,761}{57\,988}\gamma
\approx 0.0016903\text{ and }G_{a_{3}}\left( \infty \right) =0,
\end{equation*}%
we have

\begin{corollary}
Let $R_{n}$ be defined by (\ref{R_n}). Then the double inequality%
\begin{equation}
\frac{1}{24}\tfrac{\left( n+\frac{1}{2}\right) ^{2}+\frac{97\,153}{82\,840}}{%
\left( n+\frac{1}{2}\right) ^{4}+\frac{11\,165}{8284}\left( n+\frac{1}{2}%
\right) ^{2}+\frac{199\,849}{1391\,712}}<R_{n}-\gamma <\frac{1}{24}\tfrac{%
\left( n+\frac{1}{2}\right) ^{2}+\frac{97\,153}{82\,840}+\lambda _{3}}{%
\left( n+\frac{1}{2}\right) ^{4}+\frac{11\,165}{8284}\left( n+\frac{1}{2}%
\right) ^{2}+\frac{199\,849}{1391\,712}}  \label{D3}
\end{equation}%
holds for $n\in \mathbb{N}$, where $\lambda _{3}=G_{a_{3}}\left( 3/2\right)
\approx 0.0016903$ is the best constant.
\end{corollary}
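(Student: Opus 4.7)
The plan is to deduce the double inequality (\ref{D3}) directly from the complete monotonicity of $G_{a_{3}}$ established in Theorem \ref{MT-Ga}, together with the two numerical facts recorded just before the corollary, namely $G_{a_{3}}(3/2)=\lambda_{3}\approx 0.0016903$ and $G_{a_{3}}(\infty)=0$. The key identification is that, with $x=n+1/2$, the digamma reflection $\psi(n+1)=H_{n}-\gamma$ gives $R(n+1/2)=\psi(n+1)-\ln(n+1/2)=R_{n}-\gamma$, so the definition (\ref{Ga}) of $G_{a}$ at $x=n+1/2$ becomes, after setting $a=a_{3}=11165/8284$,
\begin{equation*}
G_{a_{3}}(n+\tfrac{1}{2})=24\Bigl((n+\tfrac{1}{2})^{4}+\tfrac{11165}{8284}(n+\tfrac{1}{2})^{2}+\tfrac{199849}{1391712}\Bigr)(R_{n}-\gamma)-\Bigl((n+\tfrac{1}{2})^{2}+\tfrac{97153}{82840}\Bigr),
\end{equation*}
where the two constants $\tfrac{199849}{1391712}=\tfrac{7}{40}a_{3}-\tfrac{31}{336}$ and $\tfrac{97153}{82840}=a_{3}-\tfrac{7}{40}$ are obtained by a routine arithmetic check. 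I would verify this matching of constants first, since it is the bookkeeping glue that lets the corollary's inequality be read off as $0<G_{a_{3}}(n+1/2)\le\lambda_{3}$.

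For the lower bound, I would invoke Theorem \ref{MT-Ga} to conclude that $G_{a_{3}}$ is strictly completely monotone on $(0,\infty)$, so in particular $-G_{a_{3}}'\ge 0$, i.e.\ $G_{a_{3}}$ is nonincreasing. Combined with $G_{a_{3}}(\infty)=0$ this forces $G_{a_{3}}(x)>0$ for every $x>0$. Evaluating at $x=n+1/2$ and solving the resulting inequality for $R_{n}-\gamma$ (the quartic-in-$(n+1/2)$ coefficient is strictly positive, so the division preserves the inequality) yields exactly the left-hand inequality of (\ref{D3}).

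For the upper bound, I would use the same monotonicity: since $G_{a_{3}}$ is decreasing and $n+1/2\ge 3/2$ for every $n\in\mathbb{N}$, we have $G_{a_{3}}(n+1/2)\le G_{a_{3}}(3/2)=\lambda_{3}$. Substituting the displayed formula for $G_{a_{3}}(n+1/2)$ and again dividing by the positive quartic factor gives the right-hand inequality of (\ref{D3}). Sharpness of $\lambda_{3}$ is then immediate: the case $n=1$ makes $n+1/2=3/2$, so the upper inequality of (\ref{D3}) reduces to $G_{a_{3}}(3/2)\le \lambda_{3}$, which is an equality; any strictly smaller constant $\lambda'<\lambda_{3}$ would fail at $n=1$, while the strict complete monotonicity gives $G_{a_{3}}(n+1/2)<\lambda_{3}$ for $n\ge 2$, ensuring that no $n\ge 2$ can further tighten the constant.

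There is no real analytic obstacle in this argument: the whole proof rests on Theorem \ref{MT-Ga} and on the two limiting values $G_{a_{3}}(\infty)=0$ (the only mildly subtle point, which follows from the asymptotic expansion $R(x)\sim \frac{1}{24x^{2}}-\frac{7}{960x^{4}}+\cdots$ together with the choice of constants $a_{3}$, $\tfrac{7}{40}a_{3}-\tfrac{31}{336}$, $a_{3}-\tfrac{7}{40}$ that make the expansion of $G_{a_{3}}(x)$ start at order $x^{-4}$) and $G_{a_{3}}(3/2)\approx 0.0016903$ (a direct numerical evaluation using $\psi(2)=1-\gamma$). The only step that requires any care is the algebraic verification that the coefficients in (\ref{D3}) indeed coincide with those produced by $G_{a_{3}}$, which I would spell out explicitly at the start of the proof.
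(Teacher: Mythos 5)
Your proposal is correct and follows exactly the paper's (very terse) argument: Theorem \ref{MT-Ga} gives that $G_{a_{3}}$ is decreasing with $G_{a_{3}}(\infty)=0$ and $G_{a_{3}}(3/2)=\lambda_{3}$, so $0<G_{a_{3}}(n+1/2)\le\lambda_{3}$, which after the substitution $x=n+1/2$ and the constant identifications $a_{3}-\tfrac{7}{40}=\tfrac{97153}{82840}$, $\tfrac{7}{40}a_{3}-\tfrac{31}{336}=\tfrac{199849}{1391712}$ is precisely (\ref{D3}). The only difference is that you spell out the bookkeeping (and correctly note that equality occurs at $n=1$) that the paper leaves implicit.
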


\section{Remarks}

\begin{remark}
\label{Remark Ga}The function $G_{a}$ defined by (\ref{Ga}) can be written as%
\begin{equation}
G_{a}\left( x\right) =a\times f_{7/40}\left( x\right) -F_{-31/336}\left(
x\right) =f_{7/40}\left( x\right) \times \left( a-\frac{F_{-31/336}\left(
x\right) }{f_{7/40}\left( x\right) }\right) .  \label{Ga-a}
\end{equation}

Theorem \ref{MT-Ga} tell us that%
\begin{equation}
\frac{F_{-31/336}\left( x\right) }{f_{7/40}\left( x\right) }\leq
\lim_{x\rightarrow \infty }\frac{F_{-31/336}\left( x\right) }{f_{7/40}\left(
x\right) }=\lim_{x\rightarrow \infty }\tfrac{-24\left( x^{4}-\frac{31}{336}%
\right) R\left( x\right) +x^{2}-\frac{7}{40}}{24\left( x^{2}+\frac{7}{40}%
\right) R\left( x\right) -1}\leq \frac{11\,165}{8284}.  \label{f/G<00}
\end{equation}%
On the other hand, we can prove that%
\begin{equation}
\frac{F_{-31/336}\left( x\right) }{f_{7/40}\left( x\right) }\geq
\lim_{x\rightarrow 0^{+}}\frac{F_{-31/336}\left( x\right) }{f_{7/40}\left(
x\right) }=\lim_{x\rightarrow 0^{+}}\tfrac{-24\left( x^{4}-\frac{31}{336}%
\right) R\left( x\right) +x^{2}-\frac{7}{40}}{24\left( x^{2}+\frac{7}{40}%
\right) R\left( x\right) -1}\geq \frac{155}{294}.  \label{f/G>0+}
\end{equation}%
It suffices to prove the function%
\begin{equation*}
x\mapsto V\left( x\right) =\psi \left( x+1/2\right) -\ln x-\frac{1}{24}\frac{%
x^{2}+\frac{2071}{5880}}{x^{2}\left( x^{2}+\frac{155}{294}\right) }
\end{equation*}%
is increasing on $\left( 0,\infty \right) $. Differentiation gives%
\begin{equation*}
V^{\prime }\left( x\right) =\psi ^{\prime }\left( x+1/2\right) -\frac{1}{x}+%
\frac{1728\,720x^{4}+1217\,748x^{2}+321\,005}{10x^{3}\left(
294x^{2}+155\right) ^{2}}.
\end{equation*}%
Utilizing $\psi ^{\prime }\left( x+1\right) -\psi ^{\prime }\left( x\right)
=-1/x^{2}$ yields%
\begin{eqnarray*}
V^{\prime }\left( x+1\right) -V^{\prime }\left( x\right) &=&-\tfrac{1}{%
\left( x+1/2\right) ^{2}}-\tfrac{1}{x+1}+\tfrac{1728\,720\left( x+1\right)
^{4}+1217\,748\left( x+1\right) ^{2}+321\,005}{10\left( x+1\right)
^{3}\left( 294\left( x+1\right) ^{2}+155\right) ^{2}} \\
&&+\tfrac{1}{x}-\tfrac{1728\,720x^{4}+1217\,748x^{2}+321\,005}{10x^{3}\left(
294x^{2}+155\right) ^{2}} \\
&=&-\tfrac{V_{1}\left( x\right) }{10x^{3}\left( 2x+1\right) ^{2}\left(
294x^{2}+155\right) ^{2}\left( x+1\right) ^{3}\left(
294x^{2}+588x+449\right) ^{2}},
\end{eqnarray*}%
where%
\begin{eqnarray*}
V_{1}\left( x\right)
&=&1718\,371\,882\,080x^{12}+10\,310\,231\,292\,480x^{11}+29\,399\,355\,669%
\,600x^{10} \\
&&+52\,486\,324\,833\,600x^{9}+66\,690\,983\,696\,400x^{8}+65\,258\,530\,001%
\,280x^{7} \\
&&+51\,909\,045\,513\,612x^{6}+34\,352\,301\,620\,196x^{5}+18\,881\,999\,450%
\,054x^{4} \\
&&+8378\,736\,976\,048x^{3}+2808\,871\,359\,013x^{2}+622\,502\,847\,155x+64%
\,714\,929\,005.
\end{eqnarray*}%
It is evident that $V_{1}\left( x\right) >0$ for $x>0$, and so $V^{\prime
}\left( x+1\right) -V^{\prime }\left( x\right) <0$ for $x>0$. This leads us
to%
\begin{equation*}
V^{\prime }\left( x\right) >V^{\prime }\left( x+1\right) >V^{\prime }\left(
x+2\right) >\cdot \cdot \cdot >\lim_{n\rightarrow \infty }V^{\prime }\left(
x+n\right) =0,
\end{equation*}%
which reveals that $V$ is increasing on $\left( 0,\infty \right) $.

Meanwhile, (\ref{Ga-a}) implies that an necessary condition such that the
function $-G_{a}$ is completely monotone on $\left( 0,\infty \right) $ is%
\begin{equation*}
a\leq \lim_{x\rightarrow 0+}\frac{F_{-31/336}\left( x\right) }{%
f_{7/40}\left( x\right) }=\lim_{x\rightarrow 0+}\tfrac{-24\left( x^{4}-\frac{%
31}{336}\right) R\left( x\right) +x^{2}-\frac{7}{40}}{24\left( x^{2}+\frac{7%
}{40}\right) R\left( x\right) -1}=\frac{155}{294}.
\end{equation*}%
This together with inequalities (\ref{f/G<00}) and (\ref{f/G>0+}) yield two
conjectures.
\end{remark}

\begin{conjecture}
Let the function $R$ be defined on $\left( 0,\infty \right) $ by (\ref{R(x)}%
). Then

(i) the function%
\begin{equation*}
x\mapsto \frac{-24\left( x^{4}-\frac{31}{336}\right) R\left( x\right) +x^{2}-%
\frac{7}{40}}{24\left( x^{2}+\frac{7}{40}\right) R\left( x\right) -1}
\end{equation*}%
is increasing on $\left( 0,\infty \right) $;

(ii) the function $-G_{a}$ is completely monotone on $\left( 0,\infty
\right) $ if and only if $a\leq 155/294$.
\end{conjecture}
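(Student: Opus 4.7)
The two parts of the conjecture are linked through the Bernstein representations implicit in the proofs of Theorems \ref{MT-Fa} and \ref{MT-fa}. Setting $\alpha(t) := Q''(t) + \tfrac{7}{40} Q(t)$ and $\beta(t) := -Q^{(4)}(t) + \tfrac{31}{336} Q(t)$, both positive on $(0, \infty)$ by Lemmas \ref{Lemma ddQ/Q>} and \ref{Lemma ddddQ/Q<}, one has
\begin{equation*}
F_{7/40}(x) = 24 \int_0^\infty e^{-xt} \alpha(t)\, dt, \qquad f_{-31/336}(x) = 24 \int_0^\infty e^{-xt} \beta(t)\, dt,
\end{equation*}
and the identity $G_a = a F_{7/40} - f_{-31/336}$ (direct from the definitions) gives $-G_a(x) = 24 \int_0^\infty e^{-xt}[\beta(t) - a \alpha(t)]\, dt$. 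Writing $\phi_0(t) := \beta(t)/\alpha(t)$, the ratio in part (i) is precisely the weighted Laplace average $\int_0^\infty e^{-xt} \phi_0(t) \alpha(t)\, dt \big/ \int_0^\infty e^{-xt} \alpha(t)\, dt$. The numerical identity $\tfrac{155}{294} \cdot \tfrac{7}{40} = \tfrac{31}{336}$ produces the key algebraic cancellation
\begin{equation*}
\phi_0(t) - \frac{155}{294} = \frac{-Q^{(4)}(t) - \tfrac{155}{294} Q''(t)}{Q''(t) + \tfrac{7}{40} Q(t)},
\end{equation*}
so the bound $\phi_0(t) \geq 155/294$ reduces to the single-variable inequality
\begin{equation*}
Q^{(4)}(t) + \tfrac{155}{294} Q''(t) < 0, \qquad t > 0. \tag{$\star$}
\end{equation*}

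I would first establish ($\star$) by the template of Lemmas \ref{Lemma ddQ/Q>}--\ref{Lemma ddddQ-ddQ-Q}: substitute the closed forms (\ref{ddQ}) and (\ref{ddddQ}), clear the common denominator $t^{5}\sinh^{5}(t/2)$, rescale $t \mapsto 2t$, and reduce to a mixed polynomial-hyperbolic inequality handled via sharp rational bounds in $\sinh$ and $\cosh$ of the type supplied by Lemma \ref{Lemma sinht/t} (possibly requiring refinements), ending with explicit polynomial positivity in $x = \cosh t$. With ($\star$) in hand, part (ii) is immediate. Writing $-g_a(t) = (\phi_0(t) - a)\alpha(t)$, sufficiency follows from $-g_a \geq (155/294 - a)\alpha \geq 0$ whenever $a \leq 155/294$, exhibiting $-G_a$ as the Laplace transform of a non-negative continuous function, hence strictly completely monotonic. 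For necessity, complete monotonicity forces $-G_a(x) \geq 0$, so $a \leq f_{-31/336}(x)/F_{7/40}(x)$ pointwise, and the limit $\lim_{x \to 0^+} f_{-31/336}(x)/F_{7/40}(x) = 155/294$ computed in Remark \ref{Remark Ga} forces $a \leq 155/294$.

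For part (i) I would strengthen ($\star$) to the full monotonicity
\begin{equation*}
\phi_0'(t) < 0, \qquad t > 0. \tag{$\star\star$}
\end{equation*}
Once ($\star\star$) is in hand, part (i) follows from the classical Chebyshev-type Laplace identity
\begin{equation*}
\Bigl(\int_0^\infty e^{-xt} \alpha(t)\, dt\Bigr)^{\!2} \frac{d}{dx} \frac{\int_0^\infty e^{-xt} \beta(t)\, dt}{\int_0^\infty e^{-xt} \alpha(t)\, dt} = \tfrac{1}{2} \iint_{(0,\infty)^{2}} e^{-x(s+t)} \alpha(s) \alpha(t) \bigl(\phi_0(s) - \phi_0(t)\bigr)(t - s)\, ds\, dt,
\end{equation*}
obtained by symmetrizing the $s \leftrightarrow t$ variables in the double integral; the integrand on the right is non-negative whenever $\phi_0$ is decreasing, making $f_{-31/336}(x)/F_{7/40}(x)$ strictly increasing in $x$.

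The principal obstacle is ($\star\star$). Unlike ($\star$), which is a sign inequality of the same shape as Lemmas \ref{Lemma ddQ/Q>}--\ref{Lemma ddddQ-ddQ-Q}, the derivative $\phi_0'$ is rational in $Q, Q', \ldots, Q^{(5)}$, and after clearing denominators its numerator becomes a polynomial-hyperbolic expression of substantially higher degree than in the existing lemmas. I expect the proof to parallel the two-region strategy of Lemma \ref{Lemma ddddQ/Q<}: locate a switching point $t_\ast$ numerically, handle $(0, t_\ast]$ via Taylor expansion combined with a Lemma \ref{Lemma zp}-type sign analysis of the power-series coefficients, and handle $[t_\ast, \infty)$ via refined rational bounds in $\sinh$ and $\cosh$ in the spirit of Lemma \ref{Lemma sinht/t}, reducing to positivity of an explicit polynomial in $x = \cosh t$.
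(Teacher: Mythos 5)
This statement is posed in the paper only as a conjecture; the paper gives no proof of it, just the partial evidence collected in Remark \ref{Remark Ga}, so your proposal has to stand entirely on its own. Your structural reductions are all correct and are indeed the natural way to attack the problem: the identity $G_{a}=aF_{7/40}-f_{-31/336}$ (the Remark's own display has the $F$'s and $f$'s transposed), the representation of $-G_{a}$ as the Laplace transform of $24(\beta-a\alpha)$, the reading of the ratio in (i) as a Laplace average of $\phi_{0}=\beta/\alpha$ with weight $\alpha$, the cancellation behind $\phi_{0}-\tfrac{155}{294}$, and the symmetrized Chebyshev identity. The necessity half of (ii) is also sound.

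The fatal problem is that both $(\star)$ and $(\star\star)$ are false, which the large-$t$ asymptotics reveal before any hyperbolic-polynomial machinery is needed. In the closed forms (\ref{ddQ}) and (\ref{ddddQ}) every hyperbolic term decays like $e^{-t/2}$, so $Q''(t)=2t^{-3}+O(e^{-t/2})>0$ and $Q^{(4)}(t)=24t^{-5}+O(e^{-t/2})>0$ for large $t$; hence $Q^{(4)}(t)+\tfrac{155}{294}Q''(t)>0$ there, contradicting $(\star)$. Equivalently,
\begin{equation*}
\phi_{0}(t)-\frac{155}{294}=\frac{-Q^{(4)}(t)-\frac{155}{294}Q''(t)}{Q''(t)+\frac{7}{40}Q(t)}\sim \frac{-\frac{155}{147}t^{-3}}{\frac{7}{40}t^{-1}}=-\frac{6200}{1029\,t^{2}}<0,
\end{equation*}
so $\phi_{0}$ approaches its limit $155/294$ from \emph{below} (numerically $\phi_{0}(20)\approx 0.5127<155/294\approx 0.5272$). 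Thus $\phi_{0}$ is not bounded below by $155/294$, and since $\phi_{0}(0^{+})=11165/8284>155/294$ it is not monotone either, so $(\star\star)$ fails as well. This does more than break the proof: because $-G_{a}$ is exhibited as the Laplace transform of the continuous density $24(\beta-a\alpha)$, Bernstein's theorem plus uniqueness of Laplace transforms forces $\beta-a\alpha\geq 0$ pointwise whenever $-G_{a}$ is completely monotone, i.e. $a\leq\inf_{t>0}\phi_{0}(t)$, and that infimum is strictly below $155/294$. So part (ii) of the conjecture is itself false at $a=155/294$; the sharp threshold is $\inf\phi_{0}$. Part (i) is not refuted by this, but it cannot be reached through monotonicity of $\phi_{0}$; it would require a different mechanism, e.g. an upgraded version of the author's argument with the auxiliary function $V$.
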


\begin{remark}
\label{Remark D4}In addition, using the increasing property of the function $%
V$ proved in Remark \ref{Remark Ga}, and noting that%
\begin{equation*}
V\left( \frac{3}{2}\right) =\frac{866\,519}{881\,820}-\ln \frac{3}{2}-\gamma
\approx -0.000032387\text{ and }V\left( \infty \right) =0
\end{equation*}%
we have%
\begin{equation*}
\frac{1}{24}\tfrac{\left( n+\frac{1}{2}\right) ^{2}+\frac{2071}{5880}}{%
\left( n+\frac{1}{2}\right) ^{2}\left( \left( n+\frac{1}{2}\right) ^{2}+%
\frac{155}{294}\right) }+\lambda _{4}<R_{n}-\gamma <\frac{1}{24}\tfrac{%
\left( n+\frac{1}{2}\right) ^{2}+\frac{2071}{5880}}{\left( n+\frac{1}{2}%
\right) ^{2}\left( \left( n+\frac{1}{2}\right) ^{2}+\frac{155}{294}\right) },
\end{equation*}%
where $\lambda _{4}\approx -0.000032387$ is the best possible.

Clearly, it is an improvement of (\ref{D2}), since%
\begin{eqnarray*}
&&\frac{1}{24}\frac{\left( n+\frac{1}{2}\right) ^{2}+\frac{2071}{5880}}{%
\left( n+\frac{1}{2}\right) ^{2}\left( \left( n+\frac{1}{2}\right) ^{2}+%
\frac{155}{294}\right) }-\frac{1}{24}\frac{\left( n+\frac{1}{2}\right) ^{2}-%
\frac{7}{40}}{\left( n+\frac{1}{2}\right) ^{4}-\frac{31}{336}} \\
&=&-\frac{64\,201}{120\left( 2n+1\right) ^{2}\left( 588n^{2}+588n+457\right)
\left( 168n^{4}+336n^{3}+252n^{2}+84n-5\right) }<0.
\end{eqnarray*}
\end{remark}

\begin{remark}
Theorems \ref{MT-fa} and \ref{MT-Ga} and Remark \ref{Remark Ga} offer in
fact three new sequences convergent to $\gamma $ with increasingly higher
speed. Denote by%
\begin{eqnarray*}
w_{n} &=&\sum_{k=1}^{n}\frac{1}{k}-\ln \left( n+1/2\right) -\frac{1}{24}%
\frac{\left( n+\frac{1}{2}\right) ^{2}-\frac{7}{40}}{\left( n+\frac{1}{2}%
\right) ^{4}-\frac{31}{336}}, \\
y_{n} &=&\sum_{k=1}^{n}\frac{1}{k}-\ln \left( n+1/2\right) -\frac{1}{24}%
\tfrac{\left( n+\frac{1}{2}\right) ^{2}+\frac{97\,153}{82\,840}}{\left( n+%
\frac{1}{2}\right) ^{4}+\frac{11\,165}{8284}\left( n+\frac{1}{2}\right) ^{2}+%
\frac{199\,849}{1391\,712}}, \\
z_{n} &=&\sum_{k=1}^{n}\frac{1}{k}-\ln \left( n+1/2\right) -\frac{1}{24}%
\frac{\left( n+\frac{1}{2}\right) ^{2}+\frac{2071}{5880}}{\left( n+\frac{1}{2%
}\right) ^{2}\left( \left( n+\frac{1}{2}\right) ^{2}+\frac{155}{294}\right) }
\end{eqnarray*}%
Then by Theorems \ref{MT-fa} and \ref{MT-Ga} and Remark \ref{Remark D4} we
have%
\begin{equation*}
w_{n}<z_{n}<\gamma <y_{n}.
\end{equation*}%
And, a simple check yields%
\begin{eqnarray*}
\lim_{n\rightarrow \infty }n^{8}\left( w_{n}-\gamma \right) &=&-\frac{319}{%
92\,160}, \\
\lim_{n\rightarrow \infty }n^{10}\left( y_{n}-\gamma \right) &=&\frac{%
627\,404\,761}{246\,900\,842\,496}, \\
\lim_{n\rightarrow \infty }n^{8}\left( z_{n}-\gamma \right) &=&-\frac{%
199\,849}{94\,832\,640}
\end{eqnarray*}%
These show that the sequences $\left( w_{n}\right) $, $\left( y_{n}\right) $
and $\left( z_{n}\right) $ converge to $\gamma $ as $n^{-8}$, $n^{-10}$ and $%
n^{-8}$, respectively.
\end{remark}

Lastly, inspired by Theorem \ref{MT-Fa}--\ref{MT-Ga}, we post the following
open problem.

\begin{problem}
Determine the best constants $a_{k}$ and $b_{k}$ such that the function%
\begin{equation*}
x\mapsto \left( \sum_{k=0}^{n+1}a_{k}x^{2k}\right) R\left( x\right) -\left(
\sum_{k=0}^{n}b_{k}x^{2k}\right)
\end{equation*}%
is completely monotone on $\left( 0,\infty \right) $, and satisfies that%
\begin{equation*}
\lim_{x\rightarrow \infty }\frac{\left( \sum_{k=0}^{n+1}a_{k}x^{2k}\right)
R\left( x\right) -\left( \sum_{k=0}^{n}b_{k}x^{2k}\right) }{x^{-2n-4}}=c\neq
0,\pm \infty .
\end{equation*}
\end{problem}

\end{document}